\newtheorem{theorem}{Theorem}
\newtheorem{lemma}[theorem]{Lemma}
\newtheorem{corollary}[theorem]{Corollary}
\newcommand{\lab}[1]{\label{#1}}     
\newcommand{\be}{\begin{equation}}
\newcommand{\ee}{\end{equation}}
\newcommand{\bea}{\begin{eqnarray}}
\newcommand{\eea}{\end{eqnarray}}
\newcommand{\bean}{\begin{eqnarray*}}
\newcommand{\eean}{\end{eqnarray*}}
\long\def\delete#1{}
\def\AA{{\cal A}}
\def\BB{{\cal B}}
\def\CC{{\cal C}}
\def\EE{{\cal E}}
\def\FF{{\cal F}}
\def\WW{{\cal C}}
\title{{\bf Hadwiger's conjecture for  the complements of Kneser graphs\thanks{Research supported by ARC Discovery Project DP120101081.}}}
\author{Guangjun Xu$^{a, b}$ and  Sanming Zhou$^b$  \\ \\
{\small 
 $^a$School of Mathematics and Computing Science} \\
 {\small  Zunyi Normal College, Zunyi 563002, China }  \\
 {\small   {\texttt {gjxu11@gmail.com} }} \smallskip \\ 
{\small
$^b$School of Mathematics and Statistics}\\
{\small The University of Melbourne,  Parkville, VIC 3010,   Australia}\\
{\small   {\texttt{smzhou@ms.unimelb.edu.au}}}
}
\date{}
\begin{document}

\openup 0.5\jot\maketitle

\vspace{-1cm}

\smallskip
\begin{abstract}

Hadwiger's conjecture asserts that every graph with chromatic number $t$ contains a complete minor of order $t$. Given integers $n \ge 2k+1 \ge 5$, the Kneser graph $K(n, k)$ is the graph with vertices the $k$-subsets of an $n$-set such that two vertices are adjacent if and only if the corresponding $k$-subsets are disjoint. 
We prove that Hadwiger's conjecture is true for the complements of Kneser graphs. 

 \medskip

{\it Keywords:}~ Hadwiger's conjecture;  graph colouring;  graph  minor; Kneser graph 

{\it AMS subject classification (2010):}~   05C15, 05C83
\end{abstract}

\section{Introduction}

A graph  $H$
is a {\em minor} of a graph  $G$ if  a graph isomorphic to  $H$ can be obtained from  a subgraph of   $G$ by contracting
edges. An {\em $H$-minor} is a minor isomorphic to $H$. 
The {\em Hadwiger number} of   $G$, denoted by $h(G)$, is the  maximum integer $t$   such that
  $G$ contains   a   $K_t$-minor, where $K_t$ is the complete graph with $t$ vertices. 
  
Hadwiger \cite{Had43} conjectured that every graph that is not $(t-1)$-colourable contains a $K_t$-minor; that is, $h(G)\ge \chi(G)$ for every graph $G$, where $\chi(G)$ is the chromatic number of $G$. Hadwiger's conjecture is widely believed to be one of the most difficult and beautiful problems in graph theory. It has been proved \cite{RST93} for  graphs with  $\chi(G)\le 6$, and is open for graphs
with   $\chi(G)\ge 7$. It has also been proved for certain special classes of graphs, including powers of cycles and their complements \cite{ll07}, proper circular arc graphs \cite{bc09}, line graphs \cite{rs04}, quasi-line graphs \cite{co08} and $3$-arc graphs \cite{wxz}. See \cite{toft96} for a survey. 

A strengthening of Hadwiger's conjecture due to Haj\'os asserts that every graph $G$ with $\chi(G)\ge t$ contains a subdivision of $K_{t}$. Catlin \cite{catl} proved that Haj\'os' conjecture fails for every $t \ge 7$. Obviously, if Hadwiger's conjecture is false, then counterexamples must be found 
among counterexamples to Haj\'os' conjecture. In \cite{thom} Thomassen presented several new classes of counterexamples to Haj\'os' conjecture, including the complements of the Kneser graphs $K(3k-1, k)$ for sufficiently large $k$. (The {\em Kneser graph} $K(n, k)$ is the graph with vertices the $k$-subsets of an $n$-set such that two vertices are adjacent if and only if the corresponding $k$-subsets are disjoint.) He wrote \cite{thom} that `it does not seem obvious' that these classes all satisfy Hadwiger's conjecture. Motivated by this comment, we prove in this paper that indeed the complement of every Kneser graph satisfies Hadwiger's conjecture. We notice that in the special case when $k$ divides $n$ this was established in \cite{ll07}.

Throughout the paper we use $\overline{K}(n,k)$ (instead of $\overline{K(n,k)}$) to denote the complement of $K(n,k)$. The main result of this paper is as follows.
   
\begin{theorem}
\label{th1}
Let $n$ and $k$ be integers with $n \ge 2k+1 \ge 5$. 
The complement $\overline{K}(n,k)$ of the Kneser graph $K(n,k)$ satisfies Hadwiger's conjecture; that is,
$$
h(\overline{K}(n,k))\geq \chi (\overline{K}(n,k)).
$$  
\end{theorem}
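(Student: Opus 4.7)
The plan is two-part. Part~1 pins down $\chi(\overline{K}(n,k))$. An independent set in $\overline{K}(n,k)$ is a family of pairwise-disjoint $k$-subsets of $[n]$, i.e., a matching in the complete $k$-uniform hypergraph $K_n^{(k)}$, so $\chi(\overline{K}(n,k)) = \chi'(K_n^{(k)})$, the hypergraph chromatic index. The counting bound gives $\chi'(K_n^{(k)}) \geq t := \lceil \binom{n}{k}/\lfloor n/k\rfloor\rceil$, and I would show this is tight: Baranyai's theorem furnishes a perfect matching decomposition of $K_n^{(k)}$ when $k \mid n$ (giving $\chi = \binom{n-1}{k-1}$), while a near-perfect matching decomposition argument handles the general case and yields $\chi = t$.

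Part~2 constructs a $K_t$-minor. The natural clique in $\overline{K}(n,k)$ is the star $S_v := \{A \in \binom{[n]}{k} : v \in A\}$, a clique of size $\binom{n-1}{k-1}$. When $k \mid n$ this already provides a $K_t$-subgraph, and we are done. When $k \nmid n$ we have $t > \binom{n-1}{k-1}$, and one must adjoin $t - \binom{n-1}{k-1}$ further branches drawn from $\binom{[n] \setminus \{1\}}{k}$ to the $\binom{n-1}{k-1}$ singleton branches supplied by $S_1$. Each new branch $V$ must be (i) connected in $\overline{K}(n,k)$, (ii) pairwise disjoint from the others, (iii) pairwise adjacent to every other branch, and crucially (iv) adjacent to every singleton $\{A\} \subset S_1$: the last reduces to the covering requirement that some $B \in V$ meets $A \setminus \{1\}$ for each $A \ni 1$.

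The main obstacle is producing the extra branches explicitly while meeting (i)--(iv) simultaneously. My plan is to use the matching decomposition $M_1,\dots,M_t$ from Part~1 as a combinatorial skeleton: exactly $\binom{n-1}{k-1}$ of the matchings meet $S_1$, each in a single $k$-subset (since any two members of $S_1$ intersect at $1$), and these furnish the singleton branches; the remaining $t - \binom{n-1}{k-1}$ matchings consist entirely of $k$-subsets of $[n] \setminus \{1\}$ and form the skeleton of the extra branches. Since each such matching is an independent set in $\overline{K}(n,k)$ rather than a connected subgraph, the construction must augment it by ``connector'' $k$-subsets so that the resulting branch becomes connected without violating disjointness, and so that property (iv) holds. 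Verifying these four conditions jointly is the delicate heart of the argument, particularly in the extremal case $n = 2k+1$ where the matchings are shortest and the decomposition least uniform; an induction on $n$, using the induced copy of $\overline{K}(n-1,k)$ sitting on $\binom{[n]\setminus\{1\}}{k}$, is the cleanest framework for organizing the proof.
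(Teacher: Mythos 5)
Your set-up is sound: Part~1 is exactly Baranyai's theorem (the paper's Lemma~2), the case $k\mid n$ is correctly settled by the star clique, and your reduction of condition~(iv) to ``the union of each extra branch misses at most $k-2$ labels of $[n]\setminus\{1\}$'' is the right covering requirement. But the proposal stops exactly where the proof has to start: the extra branches are never actually constructed. A colour class avoiding label $1$ is a matching, hence an \emph{independent} set of $\overline{K}(n,k)$, and turning it into a connected branch requires connector vertices that (a) meet enough of its $\lfloor n/k\rfloor$ pairwise disjoint members --- note that when $\lfloor n/k\rfloor > k$ no single $k$-set can meet them all, so a branch may need roughly $(\lfloor n/k\rfloor-1)/(k-1)$ connectors --- and (b) are pairwise distinct across all $t-\binom{n-1}{k-1}$ branches, avoid label $1$, and avoid every skeleton, so they must all be drawn from the leftover sets in the colour classes that meet $S_1$. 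Whether such a simultaneous choice exists is a genuine packing/SDR-type statement; you flag it as ``the delicate heart'' but give no argument (a counting check suggests the pool is large enough, and the case $n=2k+1$ is in fact easy since there any unused set connects any skeleton pair, but the general case with $\lfloor n/k\rfloor$ large is not addressed), and the closing suggestion of an induction on $n$ is not developed and does not visibly mesh with the matching-skeleton plan. As it stands this is a programme, not a proof.

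It is worth contrasting this with the paper's route, which sidesteps the connectivity problem entirely: instead of using colour classes, it groups the $k$-sets by their \emph{smallest label} $i$ and applies Baranyai's almost-regular partition theorem inside each group $\mathcal{A}_i(n,k)$ (Lemmas~3 and~4) to cut it into blocks of a fixed size $l$. All members of a block contain $i$, so each block is a clique (connectivity is free), and almost-regularity forces each block to cover at least $\min\{n-i+1,\,l(k-1)+1\}$ labels, which yields all the needed adjacencies between blocks and with the singletons. The price is that the number of blocks is no longer exactly $\chi(\overline{K}(n,k))$, so the substance of the paper is the counting and case analysis ($s=2$, $s=3$, $s\ge 4$ with $k\ge 4$, $k=3$, plus the sporadic $n=14$) showing that with $l\approx s/2$ the block count still reaches $\lceil\binom{n}{k}/\lfloor n/k\rfloor\rceil$. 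If you wish to salvage your approach, the missing lemma you must prove is precisely the disjoint-connector selection; otherwise the common-label trick buys connectivity for free at the cost of the counting argument.
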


In the case when $2k+1 \le n \le 3k-1$, the independence number of $\overline{K}(n,k)$ is equal to $2$, and so Theorem \ref{th1} asserts that Hadwiger's conjecture is true for this special family of graphs with independence number 2. Moreover, in this case the gap between the Hadwiger number and the chromatic number can be arbitrarily large when $n, k$ vary (see the proof of Corollary \ref{cor1}). In general, Hadwiger's conjecture for graphs of independence number 2 is an interesting but challenging problem; see a related discussion in \cite{co10}. 

Since $\overline{K}(n,2)$ is the line graph of $K_n$ and Hadwiger's conjecture is true for all line graphs \cite{rs04}, the result in Theorem \ref{th1} is known when $k=2$. In the rest of the paper we prove Theorem \ref{th1} for $k \ge 3$.

\section{Preliminaries}

We always use $n$ and $k$ to denote positive integers with $n \ge 2k+1 \ge 7$. 
Denote $[n]=\{1,2,\ldots,n\}$ and call its elements {\em labels}. Denote $[i, j] = \{i, i+1, \ldots, j\}$ for integers $i \le j$. Denote the set of all $k$-subsets of $[n]$ by ${[n] \choose k}$. 
We take the Kneser graph $K(n,k)$ as defined on the vertex set ${[n] \choose k}$ such that two members of ${[n] \choose k}$ are adjacent if and only if they are disjoint. We will use the following well-known result in the proof of Theorem \ref{th1}.
   
\begin{lemma}(Baranyai \cite{bar73})
\label{le1} 
$\chi (\overline{K}(n,k)) = \left\lceil N/\lfloor \frac{n}{k}\rfloor \right\rceil$, where $N= {n \choose k}$.
\end{lemma}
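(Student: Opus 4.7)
The plan is to prove the equality by establishing matching lower and upper bounds. The first step is to translate the chromatic number of $\overline{K}(n,k)$ into a purely combinatorial quantity. A proper colouring of $\overline{K}(n,k)$ is a partition of the vertex set ${[n] \choose k}$ into independent sets, and because adjacency in $\overline{K}(n,k)$ encodes non-disjointness of $k$-subsets, an independent set of $\overline{K}(n,k)$ is exactly a family of pairwise disjoint $k$-subsets of $[n]$. Hence $\chi(\overline{K}(n,k))$ equals the minimum number $s$ such that ${[n] \choose k}$ admits a partition into $s$ families of pairwise disjoint $k$-subsets.

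For the lower bound, any family of pairwise disjoint $k$-subsets of $[n]$ has at most $\lfloor n/k \rfloor$ members, since a family of $m$ such subsets occupies $km$ distinct elements of $[n]$ and so $km \le n$. Consequently, a partition into $s$ such families accounts for at most $s\lfloor n/k \rfloor$ of the $N = {n \choose k}$ subsets in total, which forces $s \ge \lceil N / \lfloor n/k \rfloor \rceil$.

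The upper bound is the substantive direction and is precisely Baranyai's 1973 theorem on partitioning complete hypergraphs into near-perfect matchings. The strategy is to construct the desired partition by induction on $n$ using a rounding lemma: given any real matrix $A$ with integer row sums and integer column sums, there exists an integer matrix $B$ with the same row and column sums such that every entry $b_{ij}$ equals either $\lfloor a_{ij}\rfloor$ or $\lceil a_{ij}\rceil$. This lemma is a consequence of the total unimodularity of transportation matrices. To use it, one forms a matrix whose rows correspond to the $s = \lceil N/\lfloor n/k\rfloor\rceil$ prospective colour classes and whose columns are indexed by subsets of $[n-1]$, with entries recording a balanced fractional assignment; rounding this matrix distributes the $k$-subsets of $[n]$ among the $s$ classes so that disjointness is preserved and each class receives at most $\lfloor n/k \rfloor$ subsets.

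The main obstacle is the global coordination required during the induction: one must simultaneously control the sizes of all colour classes so that no class ever absorbs more than $\lfloor n/k \rfloor$ pairwise disjoint subsets, while also ensuring that the total number of classes remains exactly $\lceil N/\lfloor n/k\rfloor\rceil$. This is precisely what the rounding lemma delivers. Since Lemma \ref{le1} is Baranyai's theorem rephrased in the language of graph colouring, I would in the paper cite \cite{bar73} directly and note only the straightforward equivalence between proper colourings of $\overline{K}(n,k)$ and Baranyai-style partitions, which is the approach the authors adopt.
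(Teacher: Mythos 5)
Your proposal is correct and takes essentially the same route as the paper: the paper offers no proof of Lemma \ref{le1} at all, simply attributing it to Baranyai \cite{bar73}, and your easy translation (independent sets of $\overline{K}(n,k)$ are families of pairwise disjoint $k$-subsets, hence classes of size at most $\lfloor n/k\rfloor$, giving the lower bound) together with the citation of Baranyai's partition theorem for the upper bound is exactly the intended reading. Nothing further is needed.
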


The {\em complete $k$-uniform hypergraph} $K_n^k$ is the hypergraph with $n$ vertices and all possible hyperedges of size $k$. We take $K_n^k$ to have vertex set $[n]$ and hyperedge set ${[n] \choose k}$; in this way each $k$-subset of $[n]$ is viewed as a vertex of $\overline{K}(n,k)$ as well as a hyperedge of $K_n^k$. 
A uniform hypergraph is called {\em almost regular} if the degrees of any two vertices differ by at most one, where the degree of a vertex is the number of hyperedges containing the vertex. We treat a family of hyperedges of a hypergraph as a spanning sub-hypergraph with the same vertex set as the hypergraph under consideration.

\begin{lemma}(Baranyai \cite{bar73})
\label{le2a}
 Let $a_1, a_2, \ldots, a_l$ be positive integers such that $\Sigma_{i=1}^l a_i = {n \choose k}$. 
 Then the set of hyperedges of 
 $K_n^k$ can be partitioned into $\EE_1, \EE_2, \ldots, \EE_l$ such that for $1\le j\le l$, $|\EE_j|=a_j$ and $\EE_j$ is an almost regular hypergraph (with the same vertex set as $K_n^k$). 
\end{lemma}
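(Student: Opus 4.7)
The plan is to follow Baranyai's original matrix-rounding strategy, which simultaneously prescribes almost-regular vertex degrees in each class and produces an explicit partition of the hyperedges. Conceptually, I want an $n \times l$ nonnegative integer matrix $D = (d_{ij})$ in which $d_{ij}$ is the intended degree of vertex $i$ in the class $\EE_j$. The column sums must equal $k a_j$ (since $\EE_j$ contains $a_j$ hyperedges of size $k$), the row sums must equal $\binom{n-1}{k-1}$ (the degree of $i$ in $K_n^k$), and the entries in column $j$ must lie in $\{\lfloor k a_j/n\rfloor, \lceil k a_j/n\rceil\}$, which is exactly the almost-regularity requirement on $\EE_j$.

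First I would verify that such a matrix $D$ exists by applying Baranyai's integer rounding lemma to the rational matrix with entries $m_{ij} = k a_j/n$. This matrix has integer column sums $k a_j$ and integer row sums $\sum_j k a_j/n = k\binom{n}{k}/n = \binom{n-1}{k-1}$, so the rounding lemma yields an integer matrix $D$ with the same row and column sums and with $d_{ij} \in \{\lfloor m_{ij}\rfloor, \lceil m_{ij}\rceil\}$. Second, I would realize $D$ by an actual partition of the hyperedges of $K_n^k$, proceeding by induction on $l$: extract $a_l$ hyperedges in which vertex $i$ appears exactly $d_{il}$ times, declare this family to be $\EE_l$, and apply the inductive hypothesis to the residual hypergraph together with the shorter profile $(a_1, \ldots, a_{l-1})$.

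The main obstacle is precisely this realization step: given the column $(d_{il})_i$, one must extract $a_l$ genuine hyperedges from the current (residual) family realizing these vertex multiplicities, not merely from an abstract multiset of $k$-subsets. Baranyai's insight is to fold the extraction into the rounding procedure itself. Rather than rounding the full matrix once, he peels off one column at a time by a bipartite transportation argument, and each intermediate rounding corresponds, via the integrality of the transportation polytope, to a legitimate choice of hyperedge family. The almost-regularity hypothesis is essential here, since it is what forces the relevant polytope to have integer extreme points, so that the combinatorial realization at every stage is guaranteed.
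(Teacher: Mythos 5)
This lemma is not proved in the paper at all: it is quoted verbatim from Baranyai's 1973 paper and used as a black box, so there is no internal argument to compare yours with. Judged on its own terms, your proposal correctly identifies the standard first step (round the rational matrix $m_{ij}=ka_j/n$ to get an integer degree matrix $D$ with the right row and column sums), but it does not actually prove the theorem, and you concede as much when you call the realization step ``the main obstacle.'' The induction you sketch --- extract $a_l$ genuine hyperedges realizing the column $(d_{il})_i$, then apply the inductive hypothesis to the residual hypergraph with the profile $(a_1,\dots,a_{l-1})$ --- breaks down twice: first, the existence of such an extraction from the actual edge set of $K_n^k$ is precisely the content of the theorem and does not follow from the existence of $D$; second, once $\mathcal{E}_l$ is removed the residual hypergraph is no longer a complete $k$-uniform hypergraph, so the lemma as stated gives no inductive hypothesis that applies to it. A correct proof must induct on a strictly stronger statement.

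The appeal to transportation-polytope integrality also misplaces where the difficulty lies. Integrality of flow/transportation polytopes with integer marginals is automatic (total unimodularity); it is not ``forced'' by almost-regularity, and by itself it only rounds numbers --- it does not convert a feasible degree matrix into a family of genuine $k$-subsets. Baranyai's actual argument inducts on the number of processed vertices $m$, $0\le m\le n$: one maintains a partition into classes of sizes $a_1,\dots,a_l$ of the \emph{multiset} of truncated edges $A\cap[m]$, $A\in\binom{[n]}{k}$ (each $S\subseteq[m]$ occurring with multiplicity $\binom{n-m}{k-|S|}$), almost regular on $[m]$, and uses an integral maximum flow (or an integer vertex of a suitable network polytope) to decide, when passing from $m$ to $m+1$, which copies in which classes absorb the new vertex; at $m=n$ the truncated edges are the actual hyperedges and the partition is the desired one. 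Your column-by-column peeling is not this argument, and as written the proposal leaves the essential realization step unproved.
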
     
 
Denote by $\AA_i(n,k)$, $1 \le i \le n-k+1$, the family of $k$-subsets of $[n]$ with $i$ as the smallest label. That is, $\AA_i(n,k) = \{A \in {[n] \choose k}: i \in A,\, A \setminus \{i\} \subseteq [i+1, n]\}$. It is clear that $|\AA_i(n,k)| = {n-i \choose k-1},\;\, 1 \le i \le n-k+1$. We say that a label of $[n]$ is {\em covered} by a family $\FF \subseteq {[n] \choose k}$ if it is in at least one member of $\FF$.

\begin{lemma}
\label{le3}
Let $i$ be an integer between $1$ and $n-k+1$, and $l$ an integer between $1$ and ${n-i \choose k-1}$. Let $d_i = \lfloor {n-i \choose k-1}/l \rfloor$. Then $\AA_i(n,k)$ can be partitioned into $\AA_{i1}^l(n,k), \AA_{i2}^l(n,k), \ldots, \AA_{i{d_i}}^l(n,k)$ each with size $|\AA_{ij}^l(n,k)| = l$, together with $\AA_{i,{d_i}+1}^l(n,k)$ of size ${n-i \choose k-1} - {d_i}l$ when ${n-i \choose k-1}$ is not divisible by $l$, such that for $1\le j \le d_i$ the hyperedges of $\AA_{ij}^l(n,k)$ cover at least $\min\{n-i+1, l(k-1)+1\}$ labels of $[n]$.
\end{lemma}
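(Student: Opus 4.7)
The idea is to transfer the problem from $\AA_i(n,k)$ to ${[i+1,n] \choose k-1}$ via the bijection $A \mapsto A\setminus\{i\}$, and then apply Lemma~\ref{le2a} to the complete $(k-1)$-uniform hypergraph with vertex set $[i+1,n]$. Since every hyperedge in $\AA_i(n,k)$ contains label $i$, that label is covered automatically by any nonempty subfamily; the requirement of covering at least $\min\{n-i+1,\, l(k-1)+1\}$ labels of $[n]$ therefore reduces to the requirement that the corresponding family of $(k-1)$-subsets of $[i+1,n]$ cover at least $\min\{n-i,\, l(k-1)\}$ labels of $[i+1,n]$.

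First I would invoke Lemma~\ref{le2a} on $K_{n-i}^{k-1}$ (whose vertex set is $[i+1,n]$) with prescribed part sizes $a_1=\cdots=a_{d_i}=l$, together with $a_{d_i+1}={n-i\choose k-1}-d_i l$ in case this quantity is positive. These are positive integers summing to ${n-i\choose k-1}$, so the lemma produces an appropriate partition $\EE_1,\ldots,\EE_{d_i}$ (and $\EE_{d_i+1}$ when needed) of ${[i+1,n]\choose k-1}$ into almost regular sub-hypergraphs of the prescribed sizes. Pulling this back through the bijection yields families $\AA_{ij}^l(n,k)$ of the stated cardinalities that partition $\AA_i(n,k)$.

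Next I would verify the coverage condition for each $\EE_j$ with $j\le d_i$. For such a $j$, the sum of vertex degrees in $\EE_j$ equals $l(k-1)$, and by the almost regular property every vertex of $[i+1,n]$ has degree $\lfloor l(k-1)/(n-i)\rfloor$ or $\lceil l(k-1)/(n-i)\rceil$. If $l(k-1)\ge n-i$, then the floor is at least $1$, so every vertex has positive degree and all $n-i$ labels of $[i+1,n]$ appear in $\EE_j$. If $l(k-1)<n-i$, then the floor is $0$ and the ceiling is $1$, and the degree sum forces exactly $l(k-1)$ vertices to have degree $1$. In either case at least $\min\{n-i,\, l(k-1)\}$ labels of $[i+1,n]$ are covered by $\EE_j$; adding label $i$ (contained in every hyperedge of $\AA_{ij}^l(n,k)$) yields the bound $\min\{n-i+1,\, l(k-1)+1\}$.

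I do not expect a serious obstacle here: Lemma~\ref{le2a} already supplies the right partition with the right sizes, and the almost regular property is precisely what forces each part to cover as many labels as an $l$-element family of $(k-1)$-sets can possibly cover. The proof is therefore mostly a matter of setting up the bijection and performing the two-case degree count above; no further combinatorial construction should be required.
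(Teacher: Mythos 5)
Your proof is correct and follows essentially the same route as the paper's: both transfer the problem via the bijection $A \mapsto A \setminus \{i\}$, invoke Lemma~\ref{le2a} on $K_{n-i}^{k-1}$ with part sizes $a_1 = \cdots = a_{d_i} = l$ (plus a remainder part), and deduce the coverage bound from almost regularity by splitting into the cases $l(k-1) \ge n-i$ and $l(k-1) < n-i$. Your phrasing in terms of $\lfloor l(k-1)/(n-i) \rfloor$ and $\lceil l(k-1)/(n-i) \rceil$ makes the degree count a bit more explicit than the paper's, but the argument is the same.
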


\begin{proof}  
Since $\AA_i(n,k) = \{X \cup \{i\}: X \in {[i+1, n] \choose k-1}\}$, by applying Lemma \ref{le2a} to $K_{n-i}^{k-1}$ with vertex set $[i+1, n]$ and setting $a_1=a_2= \cdots=a_{d_i}=l$ together with $a_{{d_i}+1}={n-i \choose k-1} - {d_i}l$ if $l$ is not a divisor of ${n-i \choose k-1}$, we obtain that $\AA_i(n,k)$ can be partitioned into $\AA_{i1}^l(n,k), \AA_{i2}^l(n,k), \ldots, \AA_{i d_i}^l(n,k)$ together with $\AA_{i, d_i + 1}^l(n,k)$ if $l$ is not a divisor of ${n-i \choose k-1}$, whose sizes are as stated in the lemma such that $\BB_{ij}^l(n,k) = \{A \setminus \{i\}: A \in \AA_{ij}^l(n,k)\}$, $1\le j \le d_i$ is an almost regular hypergraph with vertex set $[i+1, n]$. Hence, if $n-i > l(k-1)$, then for $1\le j \le d_i$ each vertex $v \in [i+1, n]$ has degree $0$ or $1$ in $\BB_{ij}^l(n,k)$, and so the hyperedges of $\AA_{ij}^l(n,k)$ cover  $l(k-1)+1$ labels of $[i, n]$. If $n-i \le l(k-1)$, then for $1\le j \le d_i$ each vertex $v \in [i+1, n]$ has positive degree in $\BB_{ij}^l(n,k)$, for otherwise all labels of $[i+1, n]$ would have degrees $0$ or $1$ in $\BB_{ij}^l(n,k)$ with $0$ occurring at least once, yielding $n-i > l(k-1)$, a contradiction. Thus, if $n-i \le l(k-1)$, then the hyperedges of $\AA_{ij}^l(n,k)$ cover all labels of $[i, n]$.  
\end{proof}

Denote by $\WW(n,k)$ the family of $k$-subsets of $[n]$ containing $n$. Then $|\WW(n,k)| = {n-1 \choose k-1}$. Similar to the proof of Lemma \ref{le3}, we can prove the following result by using Lemma \ref{le2a}. 
 
\begin{lemma}
\label{le4}
Let $l$ be an integer between $2$ and ${n-1 \choose k-1}$. Let $r = \lfloor {n-1 \choose k-1}/l \rfloor$. Then $\WW(n,k)$ can be partitioned into $\WW_{1}^l(n,k), \WW_2^l(n,k), \ldots, \WW_{r}^l(n,k)$ each with size $|\WW_{i}^l(n,k)| = l$, together with $\WW_{r+1}^l(n,k)$ of size ${n-1 \choose k-1} - rl$ when ${n-1 \choose k-1}$ is not divisible by $l$, such that for $1\le i \le r$ the hyperedges of $\WW_i^l(n,k)$ cover at least $\min\{n, l(k-1)+1\}$ labels of $[n]$.
\end{lemma}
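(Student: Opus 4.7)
The plan is to mimic the proof of Lemma \ref{le3}, with the label $n$ now playing the role that $i$ played there. Observe that $\WW(n,k)=\{X\cup\{n\}: X\in \binom{[n-1]}{k-1}\}$, so to partition $\WW(n,k)$ it suffices to partition the hyperedge set of the complete $(k-1)$-uniform hypergraph $K_{n-1}^{k-1}$ on vertex set $[n-1]$ and then append the label $n$ to every hyperedge in each part.

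I would apply Lemma \ref{le2a} to $K_{n-1}^{k-1}$ with the sequence of sizes $a_1=a_2=\cdots=a_r=l$, and $a_{r+1}={n-1\choose k-1}-rl$ adjoined whenever $l\nmid {n-1\choose k-1}$; since $\sum a_i = {n-1 \choose k-1}$, this is a valid input to Baranyai's partition lemma. The resulting families $\BB_1^l,\dots,\BB_r^l$ (and possibly $\BB_{r+1}^l$) are almost regular hypergraphs on $[n-1]$ of the prescribed sizes. Setting $\WW_i^l(n,k)=\{B\cup\{n\}: B\in \BB_i^l\}$ produces the desired partition of $\WW(n,k)$ with the correct cardinalities.

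It remains to verify the coverage assertion for $1\le i\le r$. The total degree sum in $\BB_i^l$ equals $l(k-1)$ since $|\BB_i^l|=l$. If $n-1>l(k-1)$, then the average degree is strictly less than $1$, and almost regularity forces every vertex of $[n-1]$ to have degree $0$ or $1$ in $\BB_i^l$; hence exactly $l(k-1)$ vertices of $[n-1]$ are covered, and adjoining the label $n$ yields coverage of $l(k-1)+1$ labels. If instead $n-1\le l(k-1)$, then every vertex of $[n-1]$ must have positive degree in $\BB_i^l$: otherwise a degree-$0$ vertex, together with almost regularity, would force all degrees to be $0$ or $1$ and the total $l(k-1)$ would be at most $n-2$, a contradiction. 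In this case all $n-1$ labels of $[n-1]$ are covered, and together with $n$ this gives all $n$ labels. Thus the hyperedges of $\WW_i^l(n,k)$ cover at least $\min\{n,\,l(k-1)+1\}$ labels, as required.

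The only mild obstacle is the almost-regularity argument for the coverage bound, but this is exactly the same dichotomy already handled at the end of the proof of Lemma \ref{le3}, so no new ideas are needed.
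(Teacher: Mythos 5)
Your proposal is correct and follows exactly the route the paper intends: it applies Baranyai's Lemma~\ref{le2a} to $K_{n-1}^{k-1}$ on $[n-1]$, appends the label $n$, and reruns the almost-regularity dichotomy from the proof of Lemma~\ref{le3} (the paper itself only remarks that Lemma~\ref{le4} is proved ``similar to the proof of Lemma~\ref{le3}''). No gaps; the coverage argument in both cases is the same counting of degrees $0/1$ versus positive degrees used there.
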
    

A $K_t$-minor of a graph $G$  can be viewed as a family of $t$ vertex-disjoint connected subgraphs of $G$ such that there exists at least one edge of $G$ between each pair of subgraphs. 
	  Each subgraph in the family is called a {\em branch set}. 

In the proof of Theorem \ref{th1} we will use the following well known identity: for integers $a \ge b \ge 0$,
$$
\sum_{i=0}^{a}{i \choose b}={a+1 \choose b+1}.
$$

\section{Proof of Theorem \ref{th1}}

Throughout this section we always write $n=sk+t$, where $s\ge 2$ and $0\le t\le k-1$.  
     
 
\subsection{$s=2$}    
        
\begin{lemma}
\label{le6}
   Let $n=2k+t$, where $k \ge 2$ and $1\le t\le k-1$. Then
\begin{equation}
\label{eq:1}   
h(\overline{K}(n,k)) \ge \left\{ 
  \begin{array}{l l}
   \frac{1}{2} {n \choose k} +  \frac{1}{2}{n-1 \choose k-1}- \frac{1}{2}{n-k \choose k}-\frac{k-1}{2},   & \quad \text{$1\le t\le k-2$} \\[0.2cm]
  \frac{1}{2} {n \choose k} + \frac{1}{6}{n-1 \choose k-1} - \frac{1}{2}{n-1-k \choose k}-\frac{k-1}{2}-\frac{2}{3}, & \quad \text{$t=k-1$}.         
  \end{array} \right.
\end{equation}  
\end{lemma}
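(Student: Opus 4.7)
The plan is to construct a large family of pairwise-compatible branch sets in $\overline{K}(n,k)$. I partition the $k$-subsets of $[n]$ into the families $\AA_i = \AA_i(n,k)$ ($1 \le i \le n-k+1$) of $k$-subsets with smallest label $i$, and then process them level by level. The geometric fact driving the construction is the following: if a pair $\{A, B\}$ of intersecting $k$-subsets satisfies $|[n] \setminus (A \cup B)| < k$, then no $k$-subset of $[n]$ is disjoint from $A \cup B$, so $\{A, B\}$ is automatically adjacent in $\overline{K}(n, k)$ to every other $k$-subset and hence compatible with every other branch set.

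First I use $\AA_1$ to provide singleton branch sets: all its $\binom{n-1}{k-1}$ members pairwise share the label $1$, so they form a clique and each can stand alone. I reserve $t + 1$ of them for use as intermediate vertices in step three. Next, for each $i$ with $2 \le i \le k$, I apply Lemma~\ref{le3} with $l = 2$ to partition $\AA_i$ into pairs; each pair is connected via the common label $i$. When $i \le t + 2$, Lemma~\ref{le3} guarantees $|A \cup B| \ge 2k - 1$, hence $|[n] \setminus (A \cup B)| \le t + 1 \le k - 1$ under the hypothesis $t \le k - 2$, so the pair is automatically compatible with every other branch set by the fact above. When $t + 3 \le i \le k$, Lemma~\ref{le3} produces $A \cup B = [i, n]$, and its complement $[1, i-1]$ contains fewer than $k$ elements, so the same conclusion holds. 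When $|\AA_i|$ is odd the lone unpaired vertex is discarded; this costs at most $\tfrac{1}{2}$ pair per level, so at most $\tfrac{k-1}{2}$ branch sets in total.

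For $i$ with $k + 1 \le i \le n - k + 1$ this approach fails because pairs from Lemma~\ref{le3} now leave the $(i - 1)$-element set $[1, i-1]$ uncovered, and $[1, i-1]$ contains $k$-subsets. I instead bundle all of $\AA_i$ (which is already a clique via label $i$) into a single branch set, augmented by one intermediate vertex $Y_i$ drawn from the $\AA_1$-reserve; $Y_i$ is chosen to contain $1$, to contain $i$, and to contain enough labels of $[2, i-1]$ that $Y_i$ intersects every $k$-subset of $[1, i-1]$. Such $Y_i$ fits inside $\AA_1$ because $i \le 2k - 1$ whenever $t \le k - 2$. Cross-level compatibility is then routine: any two bundled sets share $1$ through their intermediates; each bundled set meets every pair from the previous step either via label $i$ (which lies in $A \cup B = [j, n]$ for $t + 3 \le j \le k$) or by the counting $|Y_i| = k > |[n] \setminus (A \cup B)|$ for $j \le t + 2$; and singletons in $\AA_1$ meet every other branch set via label $1$.

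Summing the singleton, pair and bundle counts and applying $\sum_{i=2}^{k} |\AA_i| = \binom{n-1}{k} - \binom{n-k}{k}$ together with $\binom{n}{k} = \binom{n-1}{k-1} + \binom{n-1}{k}$ yields the first bound in \eqref{eq:1}. The case $t = k - 1$ sits on the margin of the key geometric fact: now $|[n] \setminus (A \cup B)|$ can equal $k$, so exactly one $k$-subset of $[n]$ may be disjoint from a pair in step two, and some additional merging (for instance via Lemma~\ref{le3} with $l = 3$ applied to $\AA_1$) is required to restore full compatibility. This explains the different shape of the second bound, with coefficients $\tfrac{1}{6}\binom{n-1}{k-1}$, $\binom{n-1-k}{k}$ and the additional constant $-\tfrac{2}{3}$. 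The main obstacle I anticipate is the full compatibility verification across all pairs of levels and the careful reorganisation for the $t = k - 1$ case.
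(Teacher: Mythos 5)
Your Case 1 ($1\le t\le k-2$) is essentially the paper's own construction and is correct: singleton branch sets from $\AA_1(n,k)$ together with the size-$2$ classes of $\AA_i(n,k)$, $2\le i\le k$, given by Lemma \ref{le3}, with compatibility coming from the fact that each pair covers at least $\min\{n-i+1,2k-1\}$ labels and therefore misses fewer than $k$ labels when $t\le k-2$. (Your extra step of bundling $\AA_i(n,k)$ for $k+1\le i\le n-k+1$ with a reserved vertex of $\AA_1(n,k)$ is harmless but gains nothing: each bundle consumes one reserved singleton, so your total is exactly the paper's count ${n-1\choose k-1}+\sum_{i=2}^{k}d_i$, and the paper simply ignores those families.)

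The genuine gap is the case $t=k-1$, i.e.\ $n=3k-1$, which is half of the lemma and exactly the part needed in Corollary \ref{cor1} when $n=3k-1$. You correctly identify the obstruction (a pair from $\AA_i(n,k)$, $i\ge 2$, may now miss exactly $k$ labels, and the missed set contains $1$, so it is one vertex of $\AA_1(n,k)$ non-adjacent to that pair), but you do not give a construction; your suggested fix, applying Lemma \ref{le3} with $l=3$ to $\AA_1(n,k)$, does not reach the stated bound. Indeed, replacing the ${n-1\choose k-1}$ singletons by $\lfloor {n-1\choose k-1}/3\rfloor$ triples yields at best $\frac12{n\choose k}-\frac16{n-1\choose k-1}-\frac12{n-k\choose k}-\frac{k-1}{2}-\frac23$, which has the wrong sign on the ${n-1\choose k-1}$ term compared with the claimed $\frac12{n\choose k}+\frac16{n-1\choose k-1}-\frac12{n-1-k\choose k}-\frac{k-1}{2}-\frac23$, and in particular drops below $\frac12{n\choose k}$, so it cannot support Corollary \ref{cor1}; pairing $\AA_1(n,k)$ fails for the same reason. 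The paper's device is different: for $n=3k-1$ it applies the Case-1 bound inside $[n-1]$ (note $n-1=2k+(k-2)$, so Case 1 applies and all branch sets lie in ${[n-1]\choose k}$), and then adjoins $\lfloor {n-1\choose k-1}/3\rfloor$ additional branch sets, namely the size-$3$ classes of the family $\WW(n,k)$ of $k$-sets containing the label $n$ obtained from Lemma \ref{le4}; each such triple covers $3(k-1)+1=n-1$ labels, hence meets every $k$-subset, and all triples pairwise intersect in the label $n$. Adding $\lfloor {n-1\choose k-1}/3\rfloor$ to the Case-1 bound for $n-1$ and simplifying gives the second line of (\ref{eq:1}). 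Without this (or an equivalent) idea, your argument does not establish the $t=k-1$ case.
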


\begin{proof} 
\textsf{Case 1: $1 \le t \le k-2$.}\,
Let $\AA_{i1}^2(n,k), \AA_{i2}^2(n,k), \ldots, \AA_{i d_i}^2(n,k)$ be as in Lemma \ref{le3} each with size $2$, where $1 \le i \le k$ and $d_i = \lfloor {n-i \choose k-1}/2 \rfloor$. Then by Lemma \ref{le3} the hyperedges of $\AA_{ij}^2(n,k)$ ($1 \le i \le k, 1 \le j \le d_i$) cover at least $\min\{n-i+1, 2k-1\} \ge \min\{n-k+1, 2k-1\}$ labels of $[n]$. 

Since $1 \le t \le k-2$, we have $\min\{n-k+1, 2k-1\}+k\ge n+1$ and hence for $i \ne i'$, $1 \le j \le d_i$, $1 \le j' \le d_{i'}$ there is at least one edge of $\overline{K}(n,k)$ between the subgraphs induced by $\AA_{ij}^2(n,k)$ and $\AA_{i' j'}^2(n,k)$. Similarly, for $2 \le i \le k$ and $1\le j \le d_i$, each $A \in \AA_1(n,k)$ is adjacent to at least one member of $\AA_{ij}^2(n,k)$ in $\overline{K}(n,k)$. Since for $1\le j \le d_i$ all hyperedges in $\AA_{ij}^2(n,k)$ contain $i$, $\cup_{j=1}^{d_i} \AA_{ij}^2(n,k)$ induces a complete subgraph of $\overline{K}(n,k)$. Therefore, the isolated vertices $A \in \AA_1(n,k)$ of $\overline{K}(n,k)$ and the subgraphs of $\overline{K}(n,k)$ induced by $\AA_{ij}^2(n,k)$ for $2 \le i \le k$ and $1 \le j \le d_i$ are branch sets of $\overline{K}(n,k)$, that is, they give rise to a complete minor of $\overline{K}(n,k)$.   
The number of such branch sets is given by
\begin{eqnarray}
\nonumber
|\AA_1(n,k)| + \sum_{i=2}^k d_i  
                  & \ge & {n-1 \choose k-1} + \frac{1}{2} \sum_{i=2}^k {n-i \choose k-1} - \frac{k-1}{2}\\ \nonumber
                    & = & {n-1 \choose k-1} + \frac{1}{2}\left({n-1 \choose k} -{n-k \choose k}\right) -\frac{k-1}{2} \\ \nonumber
                    & = & \frac{1}{2} {n \choose k} +  \frac{1}{2}{n-1 \choose k-1}- \frac{1}{2}{n-k\choose k}-\frac{k-1}{2}.\lab{eq:bs}
\end{eqnarray}	
This proves the first bound in (\ref{eq:1}). 

\medskip
\textsf{Case 2: $t = k-1$.}\,
We now prove the second bound in (\ref{eq:1}). By what we proved in Case 1 with $n$ replaced by $n-1$, we have a complete minor of $\overline{K}(n-1,k)$ of order no less than $\frac{1}{2} {n-1 \choose k} +  \frac{1}{2}{n-2 \choose k-1}- \frac{1}{2}{n-k-1 \choose k}-\frac{k-1}{2}$ such that all vertices involved are members of ${[n-1] \choose k}$. Since $\overline{K}(n-1,k)$ is a subgraph of $\overline{K}(n,k)$, this complete minor is also a minor of $\overline{K}(n,k)$. 
 
Let $\CC_{1}^3(n,k), \CC_2^3(n,k), \ldots, \CC_{r}^3(n,k)$ be as in Lemma \ref{le4} each with size $3$, where $r = \lfloor {n-1 \choose k-1}/3 \rfloor$. Then by Lemma \ref{le4} the hyperedges of $\CC_i^3(n,k)$ ($1\le i \le r$) cover at least $\min\{3k-1, 3(k-1)+1\} = 3k-2$ labels of $[n]$. Since $n=3k-1$ and $k \ge 2$, it follows that there is at least one edge of $\overline{K}(n,k)$ between each $\CC_i^3(n,k)$ ($1 \le i \le r$) and each of the branch sets in the complete minor mentioned in the previous paragraph. These branch sets and the subgraphs induced by $\CC_i^3(n,k)$ ($1 \le i \le r$) form a larger family of branch sets of $\overline{K}(n,k)$, because $\cup_{i=1}^r \CC_i^3(n,k)$ induces a complete subgraph of $\overline{K}(n,k)$ as all members of $\CC(n,k)$ contain $n$. The number of branch sets in this enlarged family is no less than
\begin{eqnarray*}
\nonumber
                  & &  \frac{1}{2} {n-1 \choose k} +  \frac{1}{2}{n-2 \choose k-1}- \frac{1}{2}{n-1-k \choose k}-\frac{k-1}{2}  +  \left\lfloor {n-1 \choose k-1}\Big/3 \right\rfloor \\
                    & \ge &  \frac{1}{2} {n-1 \choose k} +\frac{1}{3}{n-1 \choose k-1} - \frac{2}{3}+ \frac{1}{2}{n-2 \choose k-1}- \frac{1}{2}{n-1-k \choose k}-\frac{k-1}{2} \\
                   & = & \frac{1}{2} {n \choose k} + \left(\frac{2k-1}{6k-4}- \frac{1}{6}\right){n-1 \choose k-1} - \frac{1}{2}{n-1-k \choose k}-\frac{k-1}{2}-\frac{2}{3} \\
                    & \ge & \frac{1}{2} {n \choose k} + \frac{1}{6}{n-1 \choose k-1} - \frac{1}{2}{n-1-k \choose k}-\frac{k-1}{2}-\frac{2}{3}. 
			\end{eqnarray*} 
This proves the second bound in (\ref{eq:1}).
\end{proof}

\begin{corollary}
\label{cor1}
   Let $k\ge 3$  and  $2k+1\le n\le 3k-1$. Then 
    $$h(\overline{K}(n,k)) \ge \left\lceil {n \choose k}\Big/2 \right\rceil =  \chi(\overline{K}(n,k)).$$
\end{corollary}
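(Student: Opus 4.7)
The plan is to compute the chromatic number exactly via Lemma~\ref{le1} and then obtain a matching lower bound on the Hadwiger number from Lemma~\ref{le6}. Since $2k+1\le n\le 3k-1$ forces $\lfloor n/k\rfloor=2$, Lemma~\ref{le1} gives $\chi(\overline{K}(n,k))=\lceil\tbinom{n}{k}/2\rceil$, which establishes the right-hand equality. It remains to prove $h(\overline{K}(n,k))\ge\lceil\tbinom{n}{k}/2\rceil$.

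To this end, I would write $n=2k+t$ with $1\le t\le k-1$ and apply Lemma~\ref{le6}. In each case the bound takes the form $h(\overline{K}(n,k))\ge\tfrac12\tbinom{n}{k}+D$, where $D$ is a small rational correction depending on the case: $D_1=\tfrac12\tbinom{n-1}{k-1}-\tfrac12\tbinom{n-k}{k}-\tfrac{k-1}{2}$ when $1\le t\le k-2$, and $D_2=\tfrac16\tbinom{n-1}{k-1}-\tfrac12\tbinom{n-k-1}{k}-\tfrac{k-1}{2}-\tfrac23$ when $t=k-1$. Because $h(\overline{K}(n,k))$ is an integer, it is enough to show $D>-\tfrac12$; this already guarantees $\lceil\tfrac12\tbinom{n}{k}+D\rceil\ge\lceil\tfrac12\tbinom{n}{k}\rceil$ regardless of the parity of $\tbinom{n}{k}$.

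The remaining verification is a routine binomial inequality in each case. $D_1>-\tfrac12$ reduces to the integer inequality $\tbinom{n-1}{k-1}\ge\tbinom{n-k}{k}+k-1$, which for $k\ge 3$ and $1\le t\le k-2$ is immediate since $\tbinom{n-1}{k-1}$ is far larger than $\tbinom{n-k}{k}$ on this range. Similarly, for Case 2 with $n=3k-1$ the required inequality is $\tbinom{3k-2}{k-1}\ge 3\tbinom{2k-2}{k}+3k-1$, which one checks directly at $k=3$ (yielding the borderline $21\ge 20$) and which becomes extremely loose for $k\ge 4$ via a short ratio comparison.

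The main obstacle is the corner $k=3$ in Case 2: there $D_2=-\tfrac16$ is actually negative, so the integrality of $h$ is essential to lift the fractional bound $\tfrac{56}{2}-\tfrac16=27.83\ldots$ up to $28=\lceil\tfrac{56}{2}\rceil$. Away from this edge case the correction $D$ is comfortably positive and the Hadwiger lower bound is much stronger than what the corollary demands.
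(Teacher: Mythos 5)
Your proposal is correct and follows essentially the same route as the paper: Lemma~\ref{le1} gives the equality with $\chi(\overline{K}(n,k))$, and Lemma~\ref{le6} together with a check that the correction term $D$ is not too negative gives the matching lower bound on $h$. Your integrality refinement (requiring only $D>-\tfrac12$) is in fact a useful touch rather than a luxury: at $(n,k)=(8,3)$ the paper's claimed inequality $\tfrac16\binom{7}{2}-\tfrac12\binom{4}{3}-\tfrac{k-1}{2}-\tfrac23\ge 0$ fails (the left side equals $-\tfrac16$), and your rounding of $\tfrac12\binom{8}{3}-\tfrac16$ up to $28$ is exactly what is needed to cover that corner case.
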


	\begin{proof}  
Write $n=2k+t$ with $1\le t\le k-1$. One can verify that ${n-1 \choose k-1}- {n-k \choose k}-(k-1) \ge 0$, and that $\frac{1}{6}{n-1 \choose k-1} - \frac{1}{2}{n-1-k \choose k}-\frac{k-1}{2}-\frac{2}{3}\ge 0$ for $n=3k-1$ with $k\ge 3$. Thus $h(\overline{K}(n,k)) \ge  \frac{1}{2} {n \choose k}$ by Lemma \ref{le6}, which implies $h(\overline{K}(n,k)) \ge \left\lceil {n \choose k}\big/2 \right\rceil =  \chi(\overline{K}(n,k))$ by Lemma \ref{le1}. 
\end{proof}

\subsection{$s=3$}

\begin{lemma}
\label{le8}
   Let $n=3k+t$, where $k \ge 3$ and $0\le t\le k-1$. Then
   \[h(\overline{K}(n,k)) \ge \left\{ 
  \begin{array}{l l l l l}
     \frac{1}{3} {n \choose k} +  \frac{2}{3}{n-1 \choose k-1}- \frac{1}{3}{n-k\choose k}-\frac{2(k-2)}{3},  & \quad \text{$0 \le t\le k-3$}\\ [0.2cm]
   \frac{1}{3} {n \choose k} + \frac{1}{3}{n-1 \choose k-1} - \frac{1}{3}{n-k-1 \choose k}-\frac{2(k-2)}{3}-\frac{3}{4},  & \quad \text{$t=k-2$}\\ [0.2cm]
    60,    & \quad \text{$t=k-1=2$} \\ [0.2cm]
				505,    & \quad \text{$t=k-1=3$} \\ [0.2cm]
     \frac{1}{3} {n \choose k} +   \frac{1}{6}{n-1 \choose k-1} +  \frac{1}{6(n-1)}{n-1 \choose k-1} - \frac{1}{3}{n-k-2 \choose k}-\frac{2(k-2)}{3}-\frac{3}{2},  & \quad \text{$t=k-1\ge 4$}              .         
  \end{array} \right.\]  
\end{lemma}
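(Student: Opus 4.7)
The approach mirrors Lemma~\ref{le6}, replacing pairs by triples of hyperedges to match $\lfloor n/k\rfloor=3$ and hence $\chi(\overline{K}(n,k))\approx\frac{1}{3}{n\choose k}$ (Lemma~\ref{le1}). The four stated bounds correspond to four progressively more delicate regimes of $t$.

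\medskip\textit{Cases $0\le t\le k-3$ and $t=k-2$.} For $t\le k-3$, apply Lemma~\ref{le3} with $l=3$ to each $\AA_i(n,k)$ for $i=2,\ldots,k$. The resulting subfamilies $\AA_{ij}^3(n,k)$ induce triangles in $\overline{K}(n,k)$ (all three $k$-subsets share the label $i$), and I declare these triangles together with each singleton $\{A\}$, $A\in\AA_1(n,k)$, to be the branch sets. Pairwise adjacency follows from Lemma~\ref{le3}: the label coverings of any two branch sets have sizes at least $\min\{n-i+1,3k-2\}$ and their sum exceeds $n$ when $t\le k-3$, so the coverings share a label and some pair of $k$-subsets in the two branch sets intersects. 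Summing $|\AA_1(n,k)|+\sum_{i=2}^k\lfloor{n-i\choose k-1}/3\rfloor$ via $\lfloor x/3\rfloor\ge x/3-2/3$ and the identity $\sum_{i=2}^k{n-i\choose k-1}={n-1\choose k}-{n-k\choose k}$, then using ${n\choose k}={n-1\choose k}+{n-1\choose k-1}$, yields the first inequality. For $t=k-2$ the identity $k+(3k-2)=n$ lets a singleton from $\AA_1(n,k)$ be label-disjoint from some triple, breaking the argument; I repair it by triple-partitioning $\AA_1(n,k)$ as well and extending the construction to $i=k+1$, where Lemma~\ref{le3} still guarantees $\min\{n-k,3k-2\}=3k-2$ covered labels and $2(3k-2)>n$ forces any two triples to share a label. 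Summing $\sum_{i=1}^{k+1}\lfloor{n-i\choose k-1}/3\rfloor$ then produces the second inequality.

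\medskip\textit{Case $t=k-1$, $k\ge 4$.} Here $n=4k-1$ and I mimic Case~2 of Lemma~\ref{le6}. The induced subgraph of $\overline{K}(n,k)$ on ${[n-1]\choose k}$ equals $\overline{K}(n-1,k)$, and since $n-1=4k-2$ falls in the $t=k-2$ regime of this very lemma, the second inequality already supplies a complete minor of $\overline{K}(n-1,k)$ using only members of ${[n-1]\choose k}$. I enlarge this minor by adjoining new branch sets obtained from $\WW(n,k)$ via Lemma~\ref{le4}: each such family is a clique of $\overline{K}(n,k)$ because all its members contain $n$, and its $3k-2$ covered labels together with the $3k-2$ covered labels of any $\overline{K}(n-1,k)$-branch set exceed $n$, forcing cross-minor edges. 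A refined two-stage partition of $\WW(n,k)$ (a coarse Lemma~\ref{le4} step followed by further refinement within one block) extracts the second-order correction $\frac{1}{6(n-1)}{n-1\choose k-1}$ appearing in the fifth inequality.

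\medskip\textit{Small cases $(k,n)\in\{(3,11),(4,15)\}$.} Here the asymptotic estimates used above do not deliver the numerical values $60$ and $505$, so I supplement them with explicit constructions built from the same combinatorial primitives (triangles along each smallest label and triples containing $n$), and verify the numerical bounds by direct counting.

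\medskip The chief obstacle is the fifth bound: capturing the correction $\frac{1}{6(n-1)}{n-1\choose k-1}$ demands a sharper partition of $\WW(n,k)$ than Lemma~\ref{le4} delivers in one step, and one must simultaneously check that every pair of branch sets -- including across the $\overline{K}(n-1,k)$ versus $\WW$ interface -- has an edge of $\overline{K}(n,k)$ and that no vertex is reused.
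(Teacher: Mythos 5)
Your Case $0\le t\le k-3$ matches the paper's argument (one remark: bounding every floor by $\lfloor x/3\rfloor\ge x/3-2/3$ over $i=2,\dots,k$ gives the constant $-\tfrac{2(k-1)}{3}$, not the stated $-\tfrac{2(k-2)}{3}$; the paper recovers the extra $\tfrac23$ by noting that $3$ divides $|\AA_{t+3}(n,k)|=\binom{3(k-1)}{k-1}$). The genuine gap is your $t=k-2$ case. You correctly diagnose that for $n=4k-2$ a singleton of $\AA_1(n,k)$ may miss a triple, but your repair -- triple-partitioning $\AA_1(n,k)$ as well and extending to $i=k+1$ -- produces at most $\sum_{i=1}^{k+1}\lfloor\binom{n-i}{k-1}/3\rfloor\le\frac13\bigl(\binom{n}{k}-\binom{n-k-1}{k}\bigr)$ branch sets, which falls short of the claimed second bound by essentially $\frac13\binom{n-1}{k-1}$. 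That missing term is not cosmetic: Corollary \ref{cor2} in this regime needs precisely $\frac13\binom{n-1}{k-1}\ge\frac13\binom{n-k-1}{k}+\frac{2(k-2)}{3}+\frac34$, so your weaker bound does not suffice downstream. The paper keeps the full strength of the singletons by applying Case 1 to $n-1$ (so all $\binom{n-2}{k-1}$ members of $\AA_1(n-1,k)$ remain singleton branch sets inside $\binom{[n-1]}{k}$) and then adjoins $\lfloor\frac14\binom{n-1}{k-1}\rfloor$ new branch sets, namely the size-$4$ blocks of $\WW(n,k)$ from Lemma \ref{le4}; these cover $4k-3=n-1$ labels and therefore meet every $k$-subset of $[n-1]$, which is exactly what restores adjacency to the singletons your construction discards.

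The defect propagates into your $t=k-1\ge4$ case. First, your adjacency check assumes every branch set of the $\overline{K}(n-1,k)$-minor covers $3k-2$ labels, but in the construction that actually attains the $t=k-2$ bound the singleton branch sets cover only $k$ labels; consequently size-$3$ blocks of $\WW(n,k)$ (covering $3k-2$ labels, as your wording indicates) are not enough, and one must use size-$4$ blocks covering $4k-3=n-2$ labels, as the paper does. Second, no ``refined two-stage partition'' of $\WW(n,k)$ is needed or supplied: in the paper the correction $\frac{1}{6(n-1)}\binom{n-1}{k-1}$ is pure binomial algebra, arising from rewriting $\frac13\binom{n-2}{k-1}+\frac14\binom{n-1}{k-1}-\frac13\binom{n-1}{k-1}=\bigl(\frac16+\frac{1}{6(n-1)}\bigr)\binom{n-1}{k-1}$ using $n=4k-1$, after adding the $\lfloor\frac14\binom{n-1}{k-1}\rfloor$ new branch sets to the Case-2 bound at $n-1$. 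Finally, the values $60$ and $505$ are asserted via unspecified ``explicit constructions''; the paper obtains them from the same construction by keeping the exact floor values in the count for $(n,k)=(11,3)$ and $(15,4)$, and your sketch does not verify them.
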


\begin{proof}    
\textsf{Case 1: $0\le t\le k-3$.}\, Let $\AA_{i1}^3(n,k), \AA_{i2}^3(n,k), \ldots, \AA_{id_i}^3(n,k)$ be as in Lemma \ref{le3} each with size $3$, where $1 \le i \le k$ and $d_i = \lfloor {n-i \choose k-1}/3 \rfloor$. Then by Lemma \ref{le3} the hyperedges of $\AA_{ij}^3(n,k)$ ($1 \le i \le k, 1 \le j \le d_i$) cover at least $\min\{n-i+1, 3k-2\} \ge \min\{n-k+1, 3k-2\}$ labels of $[n]$.

Since $0\le t\le k-3$, we have $\min\{n-k+1, 3k-2\}+k\ge n+1$ and hence for $i \ne i'$, $1 \le j \le d_i$, $1 \le j' \le d_{i'}$ there is at least one edge of $\overline{K}(n,k)$ between the subgraphs induced by $\AA_{ij}^3(n,k)$ and $\AA_{i' j'}^3(n,k)$. Similarly, for $2 \le i \le k$ and $1\le j \le d_i$, each $A \in \AA_1(n,k)$ is adjacent to at least one member of $\AA_{ij}^3(n,k)$ in $\overline{K}(n,k)$. Since for $1\le j \le d_i$ all hyperedges in $\AA_{ij}^3(n,k)$ contain $i$, $\cup_{j=1}^{d_i} \AA_{ij}^3(n,k)$ induces a complete subgraph of $\overline{K}(n,k)$. Therefore, the isolated vertices $A \in \AA_1(n,k)$ of $\overline{K}(n,k)$ and the subgraphs of $\overline{K}(n,k)$ induced by $\AA_{ij}^3(n,k)$ for $2 \le i \le k$ and $1 \le j \le d_i$ are branch sets of $\overline{K}(n,k)$ yielding a complete minor. The number of such branch sets is given by (noting that $3$ divides $|\AA_{t+3}(n,k)| = {3(k-1) \choose k-1}$)
\begin{eqnarray}
\lab{eq:bs1}
|\AA_1(n,k)| + \sum_{i=2}^k d_i 
& = & {n-1 \choose k-1}+\sum_{i=2}^k \left\lfloor {n-i \choose k-1}/3 \right\rfloor\\ \nonumber 
& \ge & {n-1 \choose k-1}+\frac{1}{3}\sum_{i=2}^k {n-i \choose k-1}  - \frac{2(k-2)}{3} \\ \nonumber
& = & {n-1 \choose k-1}+\frac{1}{3}\left({n-1 \choose k} -{n-k \choose k}\right) -\frac{2(k-2)}{3} \\ \nonumber
& =  & \frac{1}{3} {n \choose k} +  \frac{2}{3}{n-1 \choose k-1}- \frac{1}{3}{n-k\choose k}-\frac{2(k-2)}{3}. 
\end{eqnarray}

\medskip
\textsf{Case 2: $t=k-2$.}\, By what we proved in Case 1 with $n$ replaced by $n-1$, we have a complete minor of $\overline{K}(n-1,k)$ (and hence of $\overline{K}(n,k)$) with order no less than $\frac{1}{3} {n-1 \choose k} +  \frac{2}{3}{n-2 \choose k-1}- \frac{1}{3}{n-k-1\choose k}-\frac{2(k-2)}{3}$ such that all vertices involved are members of ${[n-1] \choose k}$. 

Let $\CC_{1}^4(n,k), \CC_2^4(n,k), \ldots, \CC_{r}^4(n,k)$ be as in Lemma \ref{le4} each with size $4$, where $r = \lfloor {n-1 \choose k-1}/4 \rfloor$. Then by Lemma \ref{le4} the hyperedges of $\CC_i^4(n,k)$ ($1\le i \le r$) cover at least $\min\{4k-2, 4(k-1)+1\} = 4k-3$ labels of $[n]$. Since $n=4k-2$ and $k \ge 3$, it follows that there is at least one edge of $\overline{K}(n,k)$ between each $\CC_i^4(n,k)$ ($1 \le i \le r$) and each of the branch sets in the above-mentioned complete minor. These branch sets and the subgraphs induced by $\CC_i^4(n,k)$ ($1 \le i \le r$) form a larger family of branch sets of $\overline{K}(n,k)$, because $\cup_{i=1}^r \CC_i^4(n,k)$ induces a complete subgraph of $\overline{K}(n,k)$ as all members of $\CC(n,k)$ contain $n$. The number of branch sets in this enlarged family is no less than
\begin{eqnarray*}
\nonumber
   &   & \frac{1}{3} {n-1 \choose k} +  \frac{2}{3}{n-2 \choose k-1}- \frac{1}{3}{n-k-1\choose k}-\frac{2(k-2)}{3} + \left\lfloor \frac{1}{4}{n-1 \choose k-1}\right\rfloor \\ \nonumber
& \ge &  \frac{1}{3} {n-1 \choose k} +\frac{1}{4}{n-1 \choose k-1} -\frac{3}{4}+ \frac{2}{3}{n-2 \choose k-1}- \frac{1}{3}{n-1-k \choose k}-\frac{2(k-2)}{3} \\ 
& = & \frac{1}{3} {n \choose k} + \left(\frac{2(n-k)}{3n-3}- \frac{1}{12}\right){n-1 \choose k-1} - \frac{1}{3}{n-1-k \choose k}-\frac{2(k-2)}{3}-\frac{3}{4} \\ \nonumber
& \ge & \frac{1}{3} {n \choose k} + \frac{1}{3}{n-1 \choose k-1} - \frac{1}{3}{n-k-1 \choose k}-\frac{2(k-2)}{3}-\frac{3}{4}. 
\end{eqnarray*}

\medskip
\textsf{Case 3: $t=k-1$.}\, Replacing $n$ by $n-1$ in Case 2 above, we obtain a complete minor of $\overline{K}(n,k)$ of order no less than $\frac{1}{3} {n-1 \choose k} + \frac{1}{3}{n-2 \choose k-1} - \frac{1}{3}{n-k-2 \choose k}-\frac{2(k-2)}{3}-\frac{3}{4}$ such that all vertices involved are members of ${[n-1] \choose k}$. 

Let $\CC_{1}^4(n,k), \CC_2^4(n,k), \ldots, \CC_{r}^4(n,k)$ be as in Lemma \ref{le4} each with size $4$, where $r = \lfloor {n-1 \choose k-1}/4 \rfloor$. Then by Lemma \ref{le4} the hyperedges of $\CC_i^4(n,k)$ ($1\le i \le r$) cover at least $\min\{4k-1, 4(k-1)+1\} = 4k-3$ labels of $[n]$. Since $n=4k-1$ and $k \ge 3$, it follows that there is at least one edge of $\overline{K}(n,k)$ between each $\CC_i^4(n,k)$ ($1 \le i \le r$) and each of the branch sets in the complete minor mentioned in the previous paragraph. These branch sets and the subgraphs induced by $\CC_i^4(n,k)$ ($1 \le i \le r$) form a larger family of branch sets of $\overline{K}(n,k)$, because $\cup_{i=1}^r \CC_i^4(n,k)$ induces a complete subgraph of $\overline{K}(n,k)$ as all members of $\CC(n,k)$ contain $n$. The number of branch sets in this enlarged family is no less than
\begin{eqnarray*}
\nonumber
&   & \frac{1}{3} {n-1 \choose k} + \frac{1}{3}{n-2 \choose k-1} - \frac{1}{3}{n-k-2 \choose k}-\frac{2(k-2)}{3}-\frac{3}{4}+\left\lfloor \frac{1}{4}{n-1 \choose k-1}\right\rfloor \\ \nonumber
& \ge & \frac{1}{3} {n-1 \choose k} + \frac{1}{3}{n-2 \choose k-1} - \frac{1}{3}{n-k-2 \choose k}-\frac{2(k-2)}{3}-\frac{3}{2}+\frac{1}{4}{n-1 \choose k-1}\\ \nonumber
& = & \frac{1}{3} {n \choose k} +   \frac{1}{6}{n-1 \choose k-1} +  \frac{1}{6(n-1)}{n-1 \choose k-1} - \frac{1}{3}{n-k-2 \choose k}-\frac{2(k-2)}{3}-\frac{3}{2}.
\end{eqnarray*}  
			
In the case when $t=k-1=2$ or $3$, the lower bound above can be improved. For example, when $t=k-1=2$, by following the argument above but improving the estimate in (\ref{eq:bs1}) we obtain that $\overline{K}(11, 3)$ has a complete minor of order at least
${8 \choose 2} + \left(\left\lfloor {7 \choose 2}/3\right\rfloor + \left\lfloor {6\choose 2}/3\right\rfloor\right) + \left\lfloor {9\choose 2}/4\right\rfloor + \left\lfloor {10\choose 2}/4 \right\rfloor = 28 + 7  +5  + 9  + 11 =  60$.
Similarly, when $t=k-1=3$ we see that $\overline{K}(15, 4)$ has a complete minor of order at least $505$.
\end{proof}

\begin{corollary}
\label{cor2}
   Let $k\ge 3$  and  $3k\le n\le 4k-1$. Then
    $$
    h(\overline{K}(n,k)) \ge \left\lceil {n \choose k}\Big/3 \right\rceil =  \chi(\overline{K}(n,k)).
    $$
\end{corollary}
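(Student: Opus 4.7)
My plan is to combine Lemma \ref{le1} with Lemma \ref{le8}. Since $3k \le n \le 4k-1$ forces $\lfloor n/k \rfloor = 3$, Lemma \ref{le1} immediately gives $\chi(\overline{K}(n,k)) = \lceil \binom{n}{k}/3 \rceil$, so the task reduces to showing $h(\overline{K}(n,k)) \ge \binom{n}{k}/3$; the ceiling is then absorbed by the integrality of $h$. I would write $n = 3k+t$ with $0 \le t \le k-1$ and dispatch each of the four cases of Lemma \ref{le8} separately.

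In the three generic cases $0 \le t \le k-3$, $t = k-2$, and $t = k-1 \ge 4$, the bound supplied by Lemma \ref{le8} has the shape $\frac{1}{3}\binom{n}{k} + R(n,k)$, where $R(n,k)$ is a correction term built from smaller binomials and absolute constants, so it suffices to verify $R(n,k) \ge 0$. After collecting the $\frac{1}{6} + \frac{1}{6(n-1)} = \frac{n}{6(n-1)}$ coefficient in the third case and clearing denominators, these inequalities become
\begin{align*}
2\tbinom{n-1}{k-1} &\;\ge\; \tbinom{n-k}{k} + 2(k-2), \\
\tbinom{n-1}{k-1} &\;\ge\; \tbinom{n-k-1}{k} + 2(k-2) + \tfrac{9}{4}, \\
\tfrac{n}{6(n-1)}\tbinom{n-1}{k-1} &\;\ge\; \tfrac{1}{3}\tbinom{n-k-2}{k} + \tfrac{2(k-2)}{3} + \tfrac{3}{2},
\end{align*}
respectively. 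In each of them the binomial on the left has top index at least $3k-1$ while the competing binomial on the right has top index at most $3k-2$, so the left side dominates the right by a ratio growing in $k$; this easily absorbs the linear-in-$k$ additive correction. I would finish by checking the smallest admissible value of $k$ in each regime by direct substitution and then bounding the binomial ratios monotonically for larger $k$, in the same spirit as the verification used in the proof of Corollary \ref{cor1}.

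The remaining two sporadic cases $t = k-1 = 2$ (i.e., $(n,k) = (11,3)$) and $t = k-1 = 3$ (i.e., $(n,k) = (15,4)$) require no manipulation at all: Lemma \ref{le8} directly supplies $h(\overline{K}(11,3)) \ge 60$ and $h(\overline{K}(15,4)) \ge 505$, which exceed $\lceil 165/3 \rceil = 55$ and $\lceil 1365/3 \rceil = 455$, respectively. The only mild obstacle will be the third inequality above, where the coefficient of $\binom{n-1}{k-1}$ is only about $\tfrac{1}{6}$ rather than $\tfrac{2}{3}$ or $\tfrac{1}{3}$; this small coefficient is precisely what forced Lemma \ref{le8} to treat $k \in \{3,4\}$ by hand in the first place, and my proof inherits the same case split rather than offering a uniform argument.
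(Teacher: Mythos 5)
Your proposal is correct and follows essentially the same route as the paper: reduce via Lemma \ref{le1} to the bound $h \ge \tfrac{1}{3}\binom{n}{k}$, split according to the five cases of Lemma \ref{le8}, dispose of the sporadic cases $(n,k)=(11,3)$ and $(15,4)$ by the explicit numerical bounds $60>55$ and $505>455$, and for the remaining cases reduce to the three correction-term inequalities you list (which match, after clearing denominators, the inequalities the paper checks). The only points worth flagging are cosmetic: the right-hand binomials actually have top index at most $3k-3$, not $3k-2$, and your ``ratio growing in $k$'' step, while correct, is left at roughly the same level of detail as the paper's ``can be easily verified'' (the paper supplies the specific identities $\binom{n-k}{k}=\tfrac{k+t+1}{k}\binom{n-k}{k-1}<2\binom{n-k}{k-1}$ and $\tfrac{1}{2}\binom{n-1-k}{k}=\tfrac{k-1}{k}\binom{n-1-k}{k-1}$ as verification hints).
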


	\begin{proof}  
Write $n=3k+t$, where  $0\le t\le k-1$. By Lemma \ref{le1} it suffices to prove that $h(\overline{K}(n,k)) \ge \frac{1}{3} {n \choose k}$.
 By Lemma \ref{le8}, when $0 \le t\le k-3$ it suffices to prove $2{n-1 \choose k-1}\ge  {n-k\choose k} + 2(k-2)$. This can be easily verified by using ${n-k\choose k} =\frac{n-2k+1}{k}{n-k\choose k-1} = \frac{k+t+1}{k}{n-k\choose k-1} < 2{n-k\choose k-1}$.
	
In the case when $t=k-2$, by Lemma \ref{le8} it suffices to show $\frac{1}{3}{n-1 \choose k-1} \ge \frac{1}{3}{n-k-1 \choose k}+\frac{2(k-2)}{3}+\frac{3}{4}$, which can be easily verified by using $n=4k-2 \ge 10$ and $\frac{1}{2}{n-1-k \choose k}=\frac{k-1}{k}{n-1-k \choose k-1}$. 
	 
If $t=k-1=2$, then $n=4k-1=11$ and by Lemma \ref{le8}, $h(\overline{K}(11,3)) \ge  60 > 55 = \frac{1}{3} {11 \choose 3}$. If $t=k-1=3$, then $n=4k-1=15$ and by Lemma \ref{le8}, $h(\overline{K}(15,4)) \ge  505 > 455= \frac{1}{3} {15 \choose 4}$.
	
Finally, in the case when $t=k-1\ge 4$, by Lemma \ref{le8} it suffices to show ${n-1 \choose k-1} + \frac{1}{(n-1)}{n-1 \choose k-1} \ge 2{n-2-k \choose k}+4(k-2)+9$, which can be verified by using $n=4k-1$ and $k\ge 5$.
  \end{proof}

\subsection{$s\ge 4$ and $k\ge 4$}

In this section we set
$$
l'= \left\lfloor (n-1)/(k-1) \right\rfloor,\;\;
l = \left\{ 
  \begin{array}{l l}
   \left\lfloor (l'+1)/2 \right\rfloor,   &  \text{if $(n,k)= (19,4)$}\\ [0.1cm]
   \left\lceil (l'+1)/2 \right\rceil,  &  \text{if $(n,k) \ne (19,4)$,}         
  \end{array} \right. \;\;
n' := n-l(k-1).
$$
Obviously,  $4\le s \le l'$ and $2 \le l < l'$.  
 
\begin{lemma}
\label{ob1}
With the notation above we have
\begin{itemize}  
\item[\rm (a)] $l \le \frac{l' + 2}{2} \le \frac{1}{2} \left(s + 3 + \frac{s-1}{k-1}\right)$;
\item[\rm (b)] $\frac{n}{2} < l(k-1) + 1 \le \frac{n-1}{2}+k$;  
\item[\rm (c)] $\frac{1}{l}{n-n' \choose k-1} > n'$.       
\end{itemize}   
\end{lemma}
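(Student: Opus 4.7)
The plan is to handle the three parts in order, with the bulk of the work in (c). For (a), the first inequality holds directly: in either case of the definition of $l$ we have $l \le \lceil (l'+1)/2 \rceil \le (l'+1)/2 + 1/2 = (l'+2)/2$. For the second inequality, I substitute $n = sk + t$ with $0 \le t \le k-1$ into $l' = \lfloor (n-1)/(k-1) \rfloor$: the identity $(n-1)/(k-1) = s + (s+t-1)/(k-1)$ together with $t \le k-1$ forces $l' \le s + 1 + (s-1)/(k-1)$, which upon adding $2$ and halving gives the claimed bound.

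For (b), the upper bound follows from (a) by multiplying by $k-1$, adding $1$, and using $l'(k-1) \le n-1$: one obtains $l(k-1)+1 \le (l'(k-1) + 2(k-1))/2 + 1 \le (n-1)/2 + k$. For the lower bound in the generic case, the strict inequality $(l'+1)(k-1) > n-1$, coming from the floor definition of $l'$, combined with $l \ge (l'+1)/2$, yields $l(k-1) + 1 > (n+1)/2 > n/2$. The exception $(n,k) = (19,4)$, where $l = 3$, I verify by a direct arithmetic check: $l(k-1)+1 = 10 > 19/2$.

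For (c), the pivotal observation is the identity $\binom{l(k-1)}{k-1} = l \binom{l(k-1)-1}{k-2}$, which reduces the claim to $\binom{l(k-1)-1}{k-2} > n'$. In the generic case, $l'+1 \le 2l$ combined with $n \le (l'+1)(k-1)$ gives $n' \le l(k-1)$. Since $l' \ge s \ge 4$ forces $l \ge 3$, and $k \ge 4$, the inequality $k-2 \le (l(k-1)-1)/2$ holds, so by monotonicity of $\binom{m}{r}$ in $r$ up to $m/2$ we get $\binom{l(k-1)-1}{k-2} \ge \binom{l(k-1)-1}{2} = (l(k-1)-1)(l(k-1)-2)/2$, which exceeds $l(k-1)$ whenever $l(k-1) \ge 5$. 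For the exceptional pair $(19,4)$ I verify directly: $n' = 10 < 28 = \binom{9}{3}/3$.

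The main obstacle is part (c): the binomial-coefficient lower bound must be matched against the tight bound on $n'$, and the exception $(19,4)$, where the uniform bound $n \le 2l(k-1)$ fails, must be handled separately. Parts (a) and (b) are essentially arithmetic once $n = sk+t$ is substituted.
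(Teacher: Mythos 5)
Your proposal is correct and follows essentially the same route as the paper: parts (a) and (b) come from the substitution $n-1=s(k-1)+s+t-1$ and the floor bounds on $l'$, and part (c) hinges on the identity $\binom{l(k-1)}{k-1}=l\binom{l(k-1)-1}{k-2}$ together with the estimate $n'\le l(k-1)$ (with $(19,4)$ checked separately). Where the paper simply asserts in (c) that $\binom{l(k-1)-1}{k-2}-n'>0$ ``since $l(k-1)+1>n/2$ and $k\ge 4$'', you have usefully filled in the missing justification via the monotonicity of $\binom{m}{r}$ and the comparison with $\binom{m}{2}$.
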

 
\begin{proof}  
(a) The left-hand side inequality follows from the definition of $l$ and the right-hand side inequality follows from $l' = \left\lfloor \frac{s(k-1)+s+t-1}{k-1}\right\rfloor \le s + 1 + \frac{s-1}{k-1}$. 

(b) If $(n,k)\ne (19,4)$, then $l(k-1)+1\ge  \frac{1}{2}(l'+1)(k-1)+1 = l' \cdot \frac{k-1}{2} + \frac{k+1}{2}
	= \lfloor \frac{n-1}{k-1}\rfloor \cdot \frac{k-1}{2} +\frac{k+1}{2} \ge \frac{n-1-(k-2)}{k-1} \cdot \frac{k-1}{2} +\frac{k+1}{2}
	= \frac{n}{2}+1 > \frac{n}{2}$. If $(n,k)=(19,4)$, then
	 $l(k-1)+1=3 \cdot (4-1)+1> \frac{n}{2}$. On the other hand, since $l \le \lceil \frac{l'+1}{2}\rceil = \lceil \frac{1}{2} (\lfloor \frac{n-1}{k-1}\rfloor+1)\rceil \le   \frac{1}{2} (\lfloor \frac{n-1}{k-1}\rfloor+2) \le  \frac{1}{2} \cdot \frac{n-1}{k-1}+1$, we have $l(k-1) \le \frac{n-1}{2}+(k-1)$ no matter whether $(n,k)\ne (19,4)$ or not.
	 
(c) Since $l(k-1)+1 > \frac{n}{2}$ and $k\ge 4$, we have $\frac{1}{l}{n-n' \choose k-1} -n' = \frac{1}{l}{l(k-1) \choose k-1} -(n-l(k-1)) = {l(k-1)-1 \choose k-2} -(n-l(k-1)) > 0$.
 							\end{proof}
 
\begin{lemma}
\label{le2}
Let $n=sk+t$ be such that $s\ge 4$, $k \ge 4$ and $0\le t\le k-1$. Then 
$$
h(\overline{K}(n,k)) \ge \left\lceil {n \choose k}\Big/s \right\rceil =  \chi(\overline{K}(n,k)).
$$
\end{lemma}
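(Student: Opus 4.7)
The plan is to exhibit a complete minor of $\overline{K}(n,k)$ whose order matches $\chi(\overline{K}(n,k)) = \lceil {n \choose k}/s \rceil$ from Lemma \ref{le1}. The construction follows the same template as Case 1 of the proofs of Lemmas \ref{le6} and \ref{le8}, but using the carefully tuned block size $l$ and threshold $n' = n - l(k-1)$ of Lemma \ref{ob1}, so that both the adjacency argument between blocks and the counting bound work for all $s \ge 4$, $k \ge 4$.

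First I would apply Lemma \ref{le3} with block size $l$ to each layer $\AA_i(n,k)$, $1 \le i \le n'$, partitioning it into blocks $\AA_{ij}^l(n,k)$ of size $l$ (with at most one remainder block of smaller size, which I would absorb into a neighbouring full block without spoiling the properties used below). Since every member of $\AA_{ij}^l(n,k)$ contains label $i$, each block induces a clique in $\overline{K}(n,k)$, hence is connected and serves as a branch set.

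To verify that any two such branch sets are joined by an edge of $\overline{K}(n,k)$, I would note that for $i \le n'$ we have $n - i + 1 \ge n - n' + 1 = l(k-1) + 1$, so Lemma \ref{le3} guarantees that every block $\AA_{ij}^l(n,k)$ covers at least $l(k-1)+1$ labels of $[n]$. By part (b) of Lemma \ref{ob1} this is strictly more than $n/2$, so the label sets of any two blocks, being subsets of $[n]$ each of size exceeding $n/2$, must share some common label $v$; choosing $A \in \AA_{ij}^l$ and $B \in \AA_{i'j'}^l$ with $v \in A \cap B$ then yields the required edge.

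It remains to count and to verify the bound. The number of branch sets is at least $\sum_{i=1}^{n'}\lfloor {n-i \choose k-1}/l\rfloor$, which via $\lfloor x \rfloor \ge x - 1$ and the hockey-stick identity $\sum_{i=1}^{n'}{n-i \choose k-1} = {n \choose k} - {l(k-1) \choose k}$ is bounded below by $\frac{1}{l}\bigl({n \choose k} - {l(k-1) \choose k}\bigr) - n'$. After clearing denominators, the target bound $\ge \lceil {n \choose k}/s \rceil$ reduces to
\[
{n \choose k}\,\frac{s-l}{s} \;\ge\; {l(k-1) \choose k} + l\,n',
\]
and by part (c) of Lemma \ref{ob1} (giving $l\,n' < {l(k-1) \choose k-1}$) combined with Pascal's identity ${l(k-1) \choose k} + {l(k-1) \choose k-1} = {l(k-1)+1 \choose k}$, it further reduces to
\[
\frac{s-l}{s} \;\ge\; \frac{{l(k-1)+1 \choose k}}{{n \choose k}}.
\]
The main obstacle is this last numerical inequality. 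I would bound $s - l$ from below using part (a) of Lemma \ref{ob1} (this is precisely why the definition of $l$ uses the floor rather than the ceiling in the special case $(n,k) = (19,4)$, so that $l < s$ holds even for $s = 4$), and bound $l(k-1)+1$ from above by $(n-1)/2 + k$ using part (b), making the binomial ratio on the right decay fast enough in $k$ and $s$ to be dominated by $(s-l)/s$. The tightest cases are the smallest values of $s$, where $s-l$ is close to $1$, and I expect these may require some direct verification for a few sporadic $(n,k)$ before the general asymptotic estimate takes over.
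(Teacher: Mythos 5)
Your construction and reduction coincide with the paper's own proof: you partition each $\AA_i(n,k)$, $1\le i\le n'$, into blocks of size $l$ via Lemma \ref{le3}, use that every block contains $i$ (hence induces a clique) and covers $l(k-1)+1>\frac{n}{2}$ labels by Lemma \ref{ob1}(b), so that any two blocks share a covered label and are therefore joined by an edge, and you count $\sum_{i=1}^{n'}\lfloor{n-i \choose k-1}/l\rfloor$ branch sets. Your reduction, through Lemma \ref{ob1}(c) and Pascal's identity, to $\frac{s-l}{s}\ge{l(k-1)+1 \choose k}\big/{n \choose k}$ is exactly the inequality $(1-f(n,k))s\ge l$ that the paper establishes, and your explanation of why $l$ is defined with a floor at $(n,k)=(19,4)$ is the right one. (A small point: the non-strict form of your displayed inequality only gives a count $\ge {n \choose k}/s$; it is the strictness in Lemma \ref{ob1}(c) that lets an integer count pass to the ceiling, and your chain does supply it.)

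The gap is that this final inequality --- the substance of the lemma and roughly half of the paper's proof --- is left as a plan rather than proved. ``Decays fast enough'' is not automatic: in the tight cases $s=4,5$ (and $s\le 6$ when $k=4$) one can have $l=s-1$, so what is needed is $f(n,k)\le \frac{1}{s}$, a genuine numerical estimate rather than an asymptotic one. The paper closes it by bounding $f(n,k)\le g(n,k)=\prod_{j=0}^{k-1}\left(\frac{1}{2}+\frac{k-\frac{j+1}{2}}{n-j}\right)$ (using $l(k-1)+1\le\frac{n-1}{2}+k$ from Lemma \ref{ob1}(b)), noting $g(n,k)\le g(sk,k)$ and computing explicit constants (for instance $g\le 0.211$ when $k\ge 5$, $s=4$, and $g\le 0.224$ when $k=4$, $s=4$), then splitting into $k\ge 5$ and $k=4$: for the small values of $s$ it checks $l\le s-1$ via Lemma \ref{ob1}(a) and integrality, so that $(1-g)s\ge s-1\ge l$, and for larger $s$ it verifies the linear inequality $(1-g)s\ge\frac{1}{2}\left(s+3+\frac{s-1}{k-1}\right)\ge l$. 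In particular, contrary to your expectation, no sporadic $(n,k)$ verifications are needed beyond the definitional exception at $(19,4)$ --- the bounds are uniform in $n$ for each small $s$ --- but some explicit computation of this kind is indispensable, and until it is carried out your argument does not yet establish the lemma.
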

  
\begin{proof}
Let $\AA_{i1}^l(n,k), \AA_{i2}^l(n,k), \ldots, \AA_{id_i}^l(n,k)$ be as in Lemma \ref{le3} each with size $l$, where $1 \le i \le n'$ and $d_i = \lfloor {n-i \choose k-1}/l \rfloor$. Then by Lemmas \ref{le3} and \ref{ob1}(b) the hyperedges of $\AA_{ij}^l(n,k)$ ($1 \le i \le n', 1\le j \le d_i$) cover at least $\min\{n-i+1, l(k-1)+1\}= l(k-1)+1 > \frac{n}{2}$ labels. Moreover, similar to the proofs of Lemmas \ref{le6} and \ref{le8}, for each $i$, $\cup_{j=1}^{d_i} \AA_{ij}^l(n,k)$ induces a complete subgraph of $\overline{K}(n,k)$. It follows that the subgraphs of $\overline{K}(n,k)$ induced by $\AA_{ij}^l(n,k)$, $1 \le i \le n'$, $1 \le j \le d_i$, give rise to a complete minor of $\overline{K}(n,k)$ with order $\sum_{i=1}^{n'} d_i$. It remains to prove $\sum_{i=1}^{n'} d_i \ge \frac{1}{s} {n \choose k}$. In fact, using Lemma \ref{ob1}(c), we have
\begin{eqnarray*}
\sum_{i=1}^{n'} d_i  & > & \frac{1}{l} \sum_{i=1}^{n'} {n-i \choose k-1} - n' \\ 
                   & = & \frac{1}{l} \sum_{i=1}^{n' - 1} {n-i \choose k-1} + \left(\frac{1}{l}{n-n' \choose k-1} -n'\right) \\  
                   & > & \frac{1}{l} \sum_{i=1}^{n' - 1} {n-i \choose k-1} \\  
                   & = & \frac{1}{l}\left({n \choose k} -{n-n'+1 \choose k}\right) \\  
                   & = & \frac{1}{l}(1 - f(n,k)){n \choose k},
\end{eqnarray*}  
where 
$$
f(n, k) := {n-n'+1 \choose k}\Big/{n \choose k} = {l(k-1)+1 \choose k}\Big/{n \choose k}.
$$ 
In what follows we prove $(1-f(n,k))s \ge l$ and thus complete the proof. 
	
Since $l(k-1)+1 \le \frac{n-1}{2}+k$ by Lemma \ref{ob1}(b), we have 		   
\begin{eqnarray}
f(n,k)  
& = & \prod_{j=0}^{k-1} \frac{(l(k-1)+1)-j}{n-j} \nonumber \\
& \le & \prod_{j=0}^{k-1} \frac{\frac{n-1}{2}+k-j}{n-j} \nonumber \\
& = & \prod_{j=0}^{k-1} \left(\frac{1}{2}+\frac{k-\frac{j+1}{2}}{n-j}\right). \lab{eq:fn}
\end{eqnarray} 
Denote the upper bound in (\ref{eq:fn}) by $g(n, k)$. Then $g(n, k) \le g(n-1, k)$ and it suffices to prove $(1-g(n,k))s \ge l$. 
 
\medskip
\textsf{Case 1: $k\ge 5$ and $s\ge 4$.}\, Note that $(k-1)/(sk-1) \le 1/s$ and $(k/2)/(sk-k+1) \le 1/(2s-2)$ for any $s \ge 1$ and $k \ge 1$, and $(k-\frac{j+1}{2})/(sk-j) < 1/s$ for any $s \ge 2, k \ge 1$ and $j \ge 0$. Hence
\begin{eqnarray}
\nonumber
g(n, k) & \le & g(sk, k) \nonumber \\ 
& = & \left(\frac{1}{2} + \frac{1}{s} - \frac{1}{2sk}\right) \cdot \left(\frac{1}{2} + \frac{k-1}{sk-1}\right) \cdot \prod_{j=2}^{k-2} \left(\frac{1}{2}+\frac{k-\frac{j+1}{2}}{n-j}\right) \cdot \left(\frac{1}{2} + \frac{\frac{k}{2}}{sk-k+1}\right) \nonumber \\
& < &  \left(\frac{1}{2} + \frac{1}{s}\right) \left(\frac{1}{2} + \frac{1}{s}\right) \left(\frac{1}{2} + \frac{1}{s}\right)^{k-3} \left(\frac{1}{2} + \frac{1}{2s-2}\right)  \nonumber \\
& = & \left(\frac{1}{2} + \frac{1}{s}\right)^{k-1} \left(\frac{1}{2} + \frac{1}{2s-2}\right)  \nonumber \\
& \le & \left\{ 
  \begin{array}{l l l}
    0.211,  & \quad \text{if $k\ge 5$ and $s=4$}\\ [0.1cm]
    0.151,  & \quad \text{if $k\ge 5$ and $s=5$}\\ [0.1cm]
    0.119, & \quad \text{if $k\ge 5$ and $s\ge6$}.
  \end{array} \right. \label{eq:gn4} 
\end{eqnarray} 
Since $k\ge 5$ and $l$ is an integer, by Lemma \ref{ob1}(a) we get $l\le s-1$ if $s=4$ or $5$. Thus, when $k\ge 5$ and $4\le s\le 5$, by (\ref{eq:gn4}) we have $(1-g(n,k))s \ge s-1 \ge l$ as required. Suppose that $k\ge 5$ and $s\ge6$.
Then $(1-g(n,k))s > (1-0.119)s = 0.881s$ by (\ref{eq:gn4}). Combining this with Lemma \ref{ob1}(a), it suffices to show $0.881s\ge \frac{1}{2} \left(s + 3 + \frac{s-1}{k-1}\right)$, that is, $0.762sk - 1.762s - 3k + 4 \ge 0$, which is satisfied as $k\ge 5$ and $s\ge 6$.

\medskip
\textsf{Case 2: $k = 4$ and $s \ge 4$.}\, We have 
\begin{eqnarray}
\nonumber
g(n, 4) & \le & g(4s, 4) \nonumber \\ 
& = & \prod_{j=0}^{3} \left(\frac{1}{2}+\frac{4-\frac{j+1}{2}}{4s-j}\right) \nonumber \\
& = &  \frac{4s+7}{8s}\cdot\frac{4s+5}{8s-2}\cdot\frac{4s+3}{8s-4}\cdot\frac{4s+1}{8s-6} \nonumber \\
& \le & \left\{ 
  \begin{array}{l l l l}
    0.224,  & \quad \text{if $s=4$}\\ [0.1cm]
    0.176,  & \quad \text{if $s=5$}\\ [0.1cm]
    0.149, & \quad \text{if $s=6$}\\ [0.1cm]
    0.133,   & \quad \text{if $s\ge 7$.}         
  \end{array} \right. \label{eq:gn5}
\end{eqnarray} 
It can be verified that for $4\le s\le6$ we have $l\le s-1$ (noting that $l=\lfloor (l'+1)/2 \rfloor$ when $(n,k)=(19,4)$) and hence $(1-g(n,4))s \ge s-1\ge l$ by (\ref{eq:gn5}) as required. If $s\ge 7$, then by (\ref{eq:gn5}), $(1-g(n, 4))s \ge 0.867s$. It can be verified that $0.867s \ge \frac{1}{2} (s + 3 + \frac{s-1}{3})$. This together with Lemma \ref{ob1}(a) implies $(1-g(n, 4))s \ge l$. 
\end{proof}

\subsection{$s\ge 4$ and $k=3$}

\begin{lemma}
\label{le12}
Let $n = 3s + t$ be such that $s \ge 4$ and $0 \le t \le 2$. If $n\ne 14$, then 
$$
h(\overline{K}(n,3)) \ge \left\lceil {n \choose 3}\Big/s \right\rceil = \chi (\overline{K}(n,3)).
$$
\end{lemma}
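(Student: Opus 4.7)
The plan is to mimic the branch-set construction of Lemma~\ref{le2} (the $k\ge 4$, $s\ge 4$ case), with appropriate modifications to handle $k=3$. The chief difficulty is that Lemma~\ref{ob1}(c)---the crucial lower bound in the proof of Lemma~\ref{le2}---explicitly uses $k\ge 4$; for $k=3$ the corresponding inequality $\frac{1}{l}\binom{n-n'}{2} > n'$ is the more delicate condition $l > (n+1)/4$, which still holds for a careful choice of $l$ but leaves much less slack in the counting.

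Specifically, I would take $l$ to be the smallest positive integer with $2l+1 > n/2$ (so $l = \lceil (n-1)/4 \rceil$) and set $n' = n - 2l$. Then Lemma~\ref{le3} partitions each $\AA_i(n,3)$ ($1\le i \le n'$) into blocks $\AA_{ij}^l(n,3)$ of size $l$ (with possibly one smaller leftover block that we discard), and the hyperedges of each block cover at least $2l+1 > n/2$ labels of $[n]$. Exactly as in the proofs of Lemmas~\ref{le6}, \ref{le8} and~\ref{le2}, each block induces a clique in $\overline{K}(n,3)$ (its members all share the label $i$), and any two distinct blocks have covering sets of combined size more than $n$; these covering sets therefore intersect, producing at least one edge of $\overline{K}(n,3)$ between the corresponding branch sets. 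Thus the blocks collectively form a $K_t$-minor of $\overline{K}(n,3)$ with $t = \sum_{i=1}^{n'} d_i$, where $d_i = \lfloor \binom{n-i}{2}/l \rfloor$.

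The remaining task is to bound $t$ from below. Using the hockey-stick identity $\sum_{i=1}^{n'}\binom{n-i}{2} = \binom{n}{3} - \binom{2l}{3}$ together with $\lfloor x/l \rfloor \ge x/l - 1$,
\[
t \;\ge\; \frac{1}{l}\binom{n}{3} - \frac{1}{l}\binom{2l}{3} - n'.
\]
By Lemma~\ref{le1} we have $\chi(\overline{K}(n,3)) = \lceil \binom{n}{3}/s \rceil$, so the lemma reduces to the polynomial inequality $\frac{s-l}{sl}\binom{n}{3} \ge \frac{1}{l}\binom{2l}{3} + (n-2l)$. Since $l \sim n/4$ and $s \sim n/3$, both sides are of order $n^2$, but the leading constant on the left is roughly twice that on the right, so the inequality holds for all sufficiently large $n$ by elementary estimation.

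The main obstacle will be the small cases. The value $n=14$ is genuinely bad: with $l=4$ one computes $\sum d_i = 19+16+13+11+9+7 = 75$, well short of the required $\lceil 364/4 \rceil = 91$, whereas $l=3$ violates the covering requirement because $2\cdot 3 + 1 = 7 = n/2$ is not strictly greater than $n/2$. This is precisely why $n=14$ is excluded from the statement (it must be handled separately). For the remaining small values ($n \in \{12,13,15,16,17\}$, and perhaps a few more depending on where the asymptotic bound becomes effective), I would verify the counting inequality by direct computation, occasionally adopting an $l$ different from the generic formula---for instance $l=3$ works better than $l=4$ for $n=13$, boosting $\sum d_i$ from $56$ up to $88$ against the required $72$.
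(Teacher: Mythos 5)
Your construction is essentially the paper's own: the same choice of block size $l$ (your $\lceil (n-1)/4\rceil$ coincides with the paper's $l$ defined through $n=4s'+t'$), the same partition of $\AA_i(n,3)$ for $1\le i\le n'=n-2l$ via Lemma~\ref{le3}, the same covering/intersection argument, and the same lower bound $g(n)=\frac{1}{l}\bigl({n \choose 3}-{2l \choose 3}\bigr)-(n-2l)$, which (as in the paper) beats $\frac{1}{s}{n \choose 3}$ once $n$ is large ($n\ge 36$ with the paper's estimates, so the list of residual cases is all of $12\le n\le 35$, not just ``a few more''). The genuine gap is in your plan for those residual cases. You propose to settle them by ``direct computation, occasionally adopting an $l$ different from the generic formula'', but this fails for $n=22$ and $n=26$ (which, like the excluded $n=14$, are $\equiv 2 \pmod 4$). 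For $n=22$ the admissible sizes are $l\ge 6$: with $l=5$ one has $2l+1=11=n/2$, so two covered label sets of size $11$ need not intersect and your edge argument breaks; with $l=6$ the exact count is $\sum_{i=1}^{10}\lfloor{22-i \choose 2}/6\rfloor=217<220=\chi(\overline{K}(22,3))$, and larger $l$ only makes the count smaller. Similarly for $n=26$: $l=7$ gives $316<325=\chi(\overline{K}(26,3))$ and $l=8$ gives $251$. So no choice of $l$ rescues these two cases within your scheme.

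The missing idea, which the paper uses, is to fall back on the subgraph $\overline{K}(n-1,3)\subseteq\overline{K}(n,3)$: for $n\in\{18,22,26\}$ the complete minor already constructed in $\overline{K}(n-1,3)$ has order at least $g(n-1)\ge\chi(\overline{K}(n,3))$ (for instance $g(21)=231\ge 220$ and $g(25)\ge 333\ge 325$), and any minor of $\overline{K}(n-1,3)$ is a minor of $\overline{K}(n,3)$. This is also exactly why $n=14$ must be excluded from the statement: there even this trick fails, since $g(13)=81<91$, and the paper needs the separate argument of Lemma~\ref{le13} using the family $\CC(14,3)$. Your diagnosis of $n=14$ and your arithmetic for $n=13$ and $n=14$ are correct; what is missing is the $\overline{K}(n-1,3)$ device (or some substitute) for $n=22$ and $n=26$, without which the proof as proposed does not cover the full range of the lemma.
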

  
\begin{proof}  
Write  $n := 4s'+t'$, where $0\le t'\le 3$. Set	
  \[
  l = \left\{ 
  \begin{array}{l l}
    s',  & \quad \text{if $t'=0$ or $1$}\\
    s'+1,   & \quad \text{if $t'=2$ or $3$,}         
  \end{array} 
  \right. 
  \quad n'= n-2l.
  \]
Let $\AA_{i1}^l(n,3), \AA_{i2}^l(n,3), \ldots, \AA_{id_i}^l(n,3)$ be as in Lemma \ref{le3} each with size $l$, where $1 \le i \le n'$ and $d_i = \lfloor {n-i \choose 2}/l \rfloor$. Then by Lemma \ref{le3} hyperedges of $\AA_{ij}^l(n,3)$ ($1 \le i \le n', 1 \le j \le d_i$) cover at least $\min\{n-i+1, 2l+1\} = 2l + 1 > \frac{n}{2}$ labels.
Similar to the proof of Lemma \ref{le2}, one can verify that the subgraphs of $\overline{K}(n,3)$ induced by $\AA_{ij}^l(n,3)$, $1 \le i \le n'$, $1 \le j \le d_i$, give rise to a complete minor of $\overline{K}(n,3)$ of order $\sum_{i=1}^{n'} d_i$. It remains to prove $\sum_{i=1}^{n'} d_i \ge \frac{1}{s} {n \choose 3}$. Denoting $f(n) = \sum_{i=1}^{n'} d_i$, we have
\begin{eqnarray*}
 f(n)  & \ge & \frac{1}{l} \sum_{i=1}^{n'} {n-i \choose 2} - n' \\ 
                   & = & \frac{1}{l}\left({n \choose 3} -{n-n' \choose 3}\right) - n' \\  
                   & = & \frac{1}{l}\left({n \choose 3} -{2l \choose 3}\right) - (n-2l).  
                   \end{eqnarray*} 
Denote this lower bound by $g(n)$. One can verify that $\frac{n-1}{2} \le 2l \le \frac{n}{2} + 1$. Hence ${n \choose 3} -{2l \choose 3} \ge \frac{1}{6} \left(n(n-1)(n-2) - (\frac{n}{2}+1)(\frac{n}{2})(\frac{n}{2}-1)\right) = \frac{1}{48} n (7n^2 - 24n + 20) > 0$. Since  $l \le \frac{n+2}{4}$ and $\frac{n-2}{3} \le s \le \frac{n}{3}$, we have 
$g(n)-\frac{1}{s}{n \choose 3} 
\ge \frac{4}{n+2}\cdot\frac{1}{48} n (7n^2 - 24n + 20) - n + \frac{n-1}{2} -\frac{3}{n-2}{n \choose 3}
= \frac{n}{12(n+2)}(7n^2 - 24n + 20) -\frac{n^2+1}{2}
=\frac{1}{12(n+2)}(n^3-36n^2+14n-12)$. 
The function $x^3-36x^2+14x-12$ is monotonically increasing when $x \ge 24$, and it takes positive values when $x \ge 36$. Therefore, $f(n) \ge g(n) \ge \frac{1}{s}{n \choose 3}$ for $n \ge 36$ as required.  

In Table \ref{tab:hresult} we give the values of $\chi (n) = \lceil {n \choose 3}/s \rceil$ and at least one of $f(n)$ and $g(n)$ for $12 \le n \le 35$
with $n\notin\{14, 18, 22, 26\}$. Since $f(n) \ge g(n)$, we see from this table that for $12 \le n \le 35$ but $n \ne 14$, either $f(n) \ge \chi (n)$ as required or $f(n-1) \ge \chi (n)$. The latter case occurs when $n\in\{14, 18, 22, 26\}$, and in this case the subgraph $\overline{K}(n-1,3)$ of $\overline{K}(n,3)$ contains a complete minor of order at least $\chi (n)$. 
\end{proof}  
		  		
\begin{table}[h] 
\caption{Values of $f(n), g(n)$ and $\chi (n)$ for $12 \le n \le 35$} 
\smallskip

\centering 
\begin{tabular}{c rrrrrrrrrrrr}
\hline  
$n$    &  12& 13 & 14 & 15 & 16 &  17 & 18 &   19& 20 & 21   & 22& 23 \\ [0.5ex] 
\hline  
$l$      &  3 &3   & 4  & 4  &  4 &   4 &  5 &    5&   5&  5    & 6&  6 \\
$f(n)$      &    &   &    &    &  &     &    &  168 &  &      &  & 255\\
$g(n)$     & 60 & 81 &    & 92 & 118 &  147&  & & 194& 231  &  &   \\
$\chi (n)$ & 55 & 72 & 91 & 91 &112 &  136& 136&  162& 190& 190 & 220& 253 \\ [1ex] 
\hline 
\end{tabular}

\smallskip
\begin{tabular}{c rrrrrrrrrrrr}
\hline  
  $n$      & 24 & 25  &  26 & 27  &  28  &  29 & 30 & 31 & 32 & 33  & 34 & 35\\ [0.5ex] 
\hline  
$l$         & 6  &  6  &  7  & 7   &   7  &  7  & 8  & 8  &   8  &  8 &  9 &  9  \\
$f(n)$      &    &    &     &     &      &     &          \\
$g(n)$      & 288& 333 &     & 352 & 402 &  455& 423&476  &  534 & 595& 558& 619 \\
$\chi (n)$  & 253& 288 & 325 & 325 & 364  & 406 & 406&450 &496 & 496& 544& 595 \\ [1ex] 
\hline 
\end{tabular}
 
\label{tab:hresult}
\end{table}

\begin{lemma}
\label{le13} 
 $h(\overline{K}(14,3)) \ge \chi (\overline{K}(14,3))$.
\end{lemma}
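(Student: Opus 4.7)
The plan is to follow the augmentation strategy of Lemma \ref{le8}, Case 3: build a large complete minor of the subgraph $\overline{K}(13,3)\subseteq\overline{K}(14,3)$ via the construction in the proof of Lemma \ref{le12}, and then enlarge it by adjoining branch sets drawn from $\WW(14,3)$, the family of $3$-subsets of $[14]$ containing $14$. By Lemma \ref{le1}, $\chi(\overline{K}(14,3))=\lceil{14\choose3}/4\rceil=\lceil 364/4\rceil=91$, so the goal is to exhibit $91$ pairwise-adjacent branch sets.

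First I would apply Lemma \ref{le3} with $n=13$, $k=3$, $l=3$, and $1\le i\le 7$, partitioning each $\AA_i(13,3)$ into $d_i=\lfloor{13-i\choose 2}/3\rfloor$ blocks of size $3$ (discarding the smaller leftover per $i$). Each block $\AA_{ij}^3(13,3)$ induces a complete subgraph of $\overline{K}(14,3)$ since its three vertices all contain $i$, and by Lemma \ref{le3} it covers at least $\min\{14-i,7\}=7$ labels of $[13]$. As in the proof of Lemma \ref{le12}, any two such blocks are adjacent in $\overline{K}(14,3)$ because two subsets of $[13]$ each of size at least $7$ must intersect. This produces $\sum_{i=1}^{7}d_i=22+18+15+12+9+7+5=88$ pairwise-adjacent branch sets, all lying in ${[13]\choose 3}$.

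Next I would invoke Lemma \ref{le4} on $\WW(14,3)$ with $l=4$, obtaining $r=\lfloor{13\choose 2}/4\rfloor=19$ blocks $\WW_i^4(14,3)$ of size $4$ (discarding a size-$2$ leftover). Each such block induces a complete subgraph of $\overline{K}(14,3)$ because its vertices all contain $14$, and by Lemma \ref{le4} covers at least $\min\{14,9\}=9$ labels of $[14]$, hence at least $8$ labels of $[13]$. These $19$ blocks are vertex-disjoint from the earlier $88$ blocks (which lie in ${[13]\choose 3}$) and are pairwise adjacent via the common label $14$.

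Finally I would verify the cross-adjacencies: for any block $\AA_{ij}^3(13,3)$ and any block $\WW_i^4(14,3)$, their covered-label sets in $[13]$ have sizes at least $7$ and at least $8$ respectively, and $7+8>13$ forces a shared label $v\in[13]$, so the hyperedge of $\AA_{ij}^3(13,3)$ containing $v$ and the hyperedge of $\WW_i^4(14,3)$ containing $v$ intersect, giving an edge of $\overline{K}(14,3)$ between the two blocks. Combining all $88+19=107\ge 91$ branch sets yields the required $K_{91}$-minor. The only subtlety worth flagging, and the reason this case needs separate treatment, is that the direct application of Lemma \ref{le12}'s recipe to $n=14$ (with $l=s'+1=4$ and $n'=6$) produces only $19+16+13+11+9+7=75$ branch sets, short of $91$; the hybrid construction succeeds precisely because the $\overline{K}(13,3)$-minor from Lemma \ref{le12} already exceeds $\chi(\overline{K}(13,3))=72$ by enough slack to accommodate $19$ extra blocks drawn from $\WW(14,3)$.
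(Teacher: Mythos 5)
Your proposal is correct and follows essentially the same route as the paper: the paper likewise takes the $88$ branch sets of the $\overline{K}(13,3)$ construction from Lemma \ref{le12} (each covering at least $7$ labels of $[13]$) and adjoins the $19$ size-$4$ blocks of the family of $3$-sets containing $14$ obtained from Lemma \ref{le4}, checking the same coverage-based cross-adjacencies to get $88+19>91=\chi(\overline{K}(14,3))$. Your closing remark explaining why the direct Lemma \ref{le12} recipe yields only $75$ branch sets for $n=14$ is a nice (though not required) justification for the separate treatment.
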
		
		
	\begin{proof}  
Since ${[14] \choose 3} \setminus \CC(14, 3)$ is simply ${[13] \choose 3}$, 
	by Table \ref{tab:hresult} and the proof of Lemma \ref{le12} we know that ${[14] \choose 3} \setminus \CC(14, 3)$ contains a complete minor of order at least $f(13) = 88$ such that the hyperedges (of $K_{14}^3$) in each of its branch sets cover at least $\lceil 13/2 \rceil = 7$ labels of $[13]$.
 
By Lemma \ref{le4}, $\CC(14, 3)$ can be partitioned into $\lceil {13 \choose 2}/4 \rceil =  20$ hypergraphs $\CC_{1}^4(14, 3)$, $\CC_2^4(14, 3)$, $\ldots$, $\CC_{20}^4(14, 3)$ with $|\CC_{i}^4(14, 3)| = 4$ ($1 \le i \le 19$) and $|\CC_{20}^4(14, 3)| = 2$ such that the hyperedges of $\CC_i^4(14, 3)$ ($1 \le i \le 19$) cover $\min\{14, 4 \cdot (3-1)+1\} = 9$ labels of $[14]$. Thus there is at least one edge between $\CC_i^4(14, 3)$ ($1 \le i \le 19$) and each branch set of the complete minor in the previous paragraph. On the other hand, each $\CC_i^4(14, 3)$ ($1 \le i \le 19$) induces a complete subgraph of $\overline{K}(14,3)$ since all its members contain label $14$. Therefore, $\overline{K}(14,3)$ has a complete minor of order at least $88 + 19  > 91 = \chi (\overline{K}(14,3))$.  
	\end{proof}
			    
Theorem \ref{th1} follows from Corollaries \ref{cor1} and \ref{cor2} and Lemmas \ref{le2}, \ref{le12} and \ref{le13}.  
        
\vskip 1pc 
{\bf Acknowledgements}~~  
The authors would like to thank the referees for their careful reading and valuable comments.

\small

\end{document}